\documentclass[11pt]{article}

\usepackage[utf8]{inputenc}
\usepackage[T1]{fontenc}

\usepackage[verbose=true,
letterpaper,
textheight=8.7in,
textwidth=6.2in,
margin=1in,
headheight=12pt,
headsep=25pt,
footskip=30pt]{geometry}

\usepackage{microtype}
\usepackage{mathtools}
\usepackage{booktabs}
\usepackage[authoryear,round]{natbib}
\usepackage{graphicx}
\usepackage{amsmath,amssymb,amsfonts,amsxtra,bm}
\usepackage{amsthm}
\usepackage{xspace}
\usepackage{enumerate}
\usepackage{hyperref}
\usepackage{url}
\usepackage{mathrsfs}
\usepackage{colortbl}

\usepackage{cleveref}
\usepackage{algorithm}
\usepackage{algorithmic}
\usepackage{comment}

\usepackage{makecell,multirow}       
\usepackage{nicefrac}       
\usepackage{thmtools}  
\usepackage{xspace}
\usepackage{footnote, tablefootnote}
\usepackage{float}
\floatstyle{plaintop}
\restylefloat{table}
\usepackage{epstopdf}
\usepackage{latexsym}
\usepackage{graphicx}

\numberwithin{equation}{section}
\setcounter{section}{0} 

\usepackage{ss}       

\newcommand{\vm}{\bm{m}}

\newcommand{\reals}{\mathbb{R}}

\newcommand{\half}{\tfrac{1}{2}}
\newcommand{\C}{\mathbb{C}}

\newcommand{\Cc}{\mathscr{C}}
\newcommand{\kron}{\otimes}

\newcommand{\inv}[1]{{#1}^{-1}}
\newcommand{\nlsum}{\sum\nolimits}
\newcommand{\nlprod}{\prod\nolimits}

\newcommand{\ip}[2]{\langle {#1},\, {#2} \rangle}

\newcommand{\norm}[1]{\|{#1}\|}

\newcommand{\sdiv}{\delta_S}
\newcommand{\adiv}{\delta}

\newcommand\myhash{\texttt{\#}}

\newcommand{\dist}{d}
\DeclareMathOperator{\trace}{tr}
\DeclareMathOperator{\Diag}{Diag}
\DeclareMathOperator{\per}{per}
\DeclareMathOperator{\real}{Re}
\DeclareMathOperator{\imag}{Im}

\newcommand{\aper}{\per_\alpha}

\newtheorem{theorem}{Theorem}

\newtheorem{lemma}[theorem]{Lemma}
\newtheorem{cor}[theorem]{Corollary}

\newtheorem{claim}[theorem]{Claim}

\numberwithin{equation}{section}

\begin{document}
\title{Positive definite functions of noncommuting contractions, Hua-Bellman matrices, and a new distance metric}
\author{\name{Suvrit Sra} \email{suvrit@mit.edu}\\ \addr{LIDS, Massachusetts Institute of Technology, Cambridge, MA, USA}}

\maketitle

\begin{abstract}
  We study positive definite functions on noncommuting strict contractions. In particular, we study functions that induce positive definite Hua-Bellman matrices (i.e., matrices of the form $[\det(I-A_i^*A_j)^{-\alpha}]_{ij}$ where $A_i$ and $A_j$ are strict contractions and $\alpha\in\C$). We start by revisiting a 1959 work of \citeauthor{bellman1959} (R.~Bellman~\emph{Representation theorems and inequalities for Hermitian matrices}; Duke Mathematical J., 26(3), 1959) that studies Hua-Bellman matrices and claims a strengthening of \citeauthor{hua1955}'s representation theoretic results on their positive definiteness~(L.-K. Hua, \emph{Inequalities involving determinants}; Acta Mathematica Sinica, 5(1955), pp.~463--470). We uncover a critical error in Bellman's proof that has surprisingly escaped notice to date. We ``fix'' this error and provide conditions under which $\det(I-A^*B)^{-\alpha}$ is a positive definite function; our conditions correct Bellman's claim and subsume both Bellman's and Hua's prior results. Subsequently, we  build on our result and introduce a new hyperbolic-like geometry on noncommuting contractions, and remark on its potential applications.
\end{abstract}

\section{Introduction}
We study an important positive definite function on strictly contractive complex matrices, namely, $\det(I-Z)^{-\alpha}$, where $\|Z\| < 1$ and $\alpha \in \reals$. This function arises in a variety of contexts, including multivariable complex analysis~\citep{hua1955}, non-Euclidean geometry~\citep{sra2016}, mathematical optimization~\citep{sra2015}, combinatorics~\citep{branden2012,verejones1988}, probability theory~\citep{shirai2007}, and matrix analysis~\citep{fzhang2009,xuXu2009,ando2008}, among others.

More specifically, we study conditions under which $(A,B) \mapsto \det(I-A^*B)^{-\alpha}$ is a \emph{kernel function}, i.e., a function that admits an inner-product representation $\ip{\Phi_\alpha(A)}{\Phi_\alpha(B)}$. The study of such functions has a long history in mathematics, dating back at least to \citet{szego1933}, who studied the closely related question about positivity of Taylor series coefficients for $P^{-\alpha}$ for the polynomial $P(x) = \sum_{i=1}^n\prod_{j\neq i}(1-x_j)$. This question for other polynomials such as the determinantal polynomial $P(x) = \det(\sum_i x_iA_i)$  was revisited and closely studied by~\citet{scott2014}, who in particular approached it via complete monotonicity; their results form the basis of \citet{branden2012}'s results, which we will also build upon later in the paper.

There is an additional, different motivation behind our study. The starting point is an elegant block-matrix identity discovered by Loo-Keng Hua~\citep{hua1955}, which, for strict contractions $A$ and $B$ states that
\begin{equation*}
  I-B^*B + (A-B)^*(I-A^*A)^{-1}(A-B) = (I-A^*B)(I-AA^*)^{-1}(I-B^*A).
\end{equation*}
Hua's identity immediately implies the positive definiteness of the block matrix
\begin{equation*}
  \begin{bmatrix}
    \inv{(I-A^*A)} & \inv{(I-A^*B)}\\
    \inv{(I-B^*A)} & \inv{(I-B^*B)}
  \end{bmatrix},
\end{equation*}
a property that is otherwise not necessarily obvious. Subsequent to Hua's discovery, various other works, e.g.,~\citep{marcus1958}, \citep{bellman1959}, \citep{ando1980}, ~\citep{xuXu2009,xuXu2011}, and~\citep{fzhang2009}, among others, continued the study of Hua-like block matrices and operator inequalities induced by them.  
Beyond Hua's work~\citep{hua1955}, a common  denominator of research on this topic has been~\citet{bellman1959}'s work that studies Hua matrices through the lens of positive definite functions. 

In particular, \citet{bellman1959} claims that for \emph{contractive} real matrices $A_1,\ldots,A_m$ of size $n\times n$, the \emph{Hua-Bellman} matrix
\begin{equation}
  \label{eq:1}
  H_\alpha := [\det(I-A_i^TA_j)^{-\alpha}]_{i,j=1}^m,
\end{equation}
is positive definite for $\alpha$ being a half-integer $\alpha = j/2$ and for real $\alpha > \half(n-1)$. This positive definiteness is tantamount to $\det(I-A^*B)^{-\alpha}$ being a positive definite kernel function for the noted choice of $\alpha$. 
Bellman claims to offer a significant simplication and generalization to \citet{hua1955}'s work that had already shown $H_\alpha \succeq 0$ for $n\times n$ complex contractions for $\alpha > n-1$ (Hua's result is based on representation theoretic ideas combined with multivariable complex analysis.). 

While Bellman's investigation via integral representations of $\det(A)^{-\alpha}$ is foundational, unfortunately, his proof contains an error that also invalidates subsequent works that build on his claims. We uncover this old error bleow, and study the function $\det(I-A^*B)^{-\alpha}$ afresh, ultimately leading to Theorem~\ref{thm:main} that presents the most general conditions (known to us)  ensuring its positive definiteness. Subsequently, in Theorem~\ref{thm:dist} we present a closely related hyperbolic-like geometry on noncommuting strict contractions that is suggested by Hua-Bellman matrices. 

\subsection{Uncovering an old error}
\begin{claim}[\textbf{Theorem 3} in~\cite{bellman1959}]
  \label{claim:bellman}
  Let $A_1,\ldots,A_m$ be strictly contractive real matrices. Then, the matrix
  \begin{equation}
    \label{eq:bell}
    H_{k/2} = \left[\frac{1}{\det(I-A_i^TA_j)^{k/2}} \right]_{i,j=1}^m,
  \end{equation}
  is positive semidefinite for all integers $k\ge 1$.
\end{claim}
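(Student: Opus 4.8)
The plan is to reduce everything to a single kernel and then concentrate effort there. Set $K(A,B):=\det(I-A^TB)^{-1/2}$. First, $K$ is well defined and real valued on real strict contractions: the eigenvalues $\mu_1,\dots,\mu_n$ of $A^TB$ satisfy $|\mu_i|<1$ and are closed under complex conjugation, so $\det(I-A^TB)=\prod_i(1-\mu_i)>0$; moreover $\det(I-A^TB)=\det\!\big((I-A^TB)^{T}\big)=\det(I-B^TA)$, so $K(A,B)=K(B,A)$. Consequently $H_{k/2}=\big[K(A_i,A_j)^{k}\big]_{i,j=1}^{m}$ is the $k$-fold entrywise (Hadamard) power of $\big[K(A_i,A_j)\big]_{i,j}$. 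By the Schur product theorem, if $K$ is a positive definite kernel then so is $K^{k}$ for every integer $k\ge1$, and the claim follows. It therefore suffices to prove that $K(A,B)=\det(I-A^TB)^{-1/2}$ is a positive definite kernel on the set of real strictly contractive $n\times n$ matrices.

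As a warm-up, the companion kernel $\det(I-A^TB)^{-1}$ is easy to handle directly. Expanding $\det(I-tM)^{-1}=\sum_{j\ge0}t^{j}\trace\!\big(S^{j}M\big)$, where $S^{j}M$ denotes the action of $M$ on the $j$-th symmetric power $S^{j}(\reals^{n})$, and putting $t=1$, $M=A^TB$ (the series converges because $\|A^TB\|<1$), functoriality gives $S^{j}(A^TB)=S^{j}(A)^{T}S^{j}(B)$ and $S^{j}(A^{T})=S^{j}(A)^{T}$ for the natural inner product on $S^{j}(\reals^{n})$, whence
\[
\det(I-A^TB)^{-1}=\sum_{j\ge0}\ip{S^{j}A}{\,S^{j}B}_{\mathrm{HS}}=\ip{\Phi(A)}{\Phi(B)},\qquad \Phi(A):=\big(S^{j}A\big)_{j\ge0},
\]
a genuine Gram function since $\|\Phi(A)\|^{2}=\sum_{j}\trace S^{j}(A^TA)=\det(I-A^TA)^{-1}<\infty$. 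The point is that this does not survive passage to the exponent $\tfrac12$: because $I-A^TB$ is not symmetric when $A\neq B$, the Gaussian representation $\det(S)^{-1/2}=(2\pi)^{-n/2}\!\int_{\reals^{n}}e^{-\frac12 x^TSx}\,dx$ is unavailable, and the analogue of the expansion above — writing $\det(I-A^TB)^{-1/2}=\sum_\lambda c_\lambda\trace\mathbb{S}_\lambda(A^TB)=\sum_\lambda c_\lambda\ip{\mathbb{S}_\lambda A}{\mathbb{S}_\lambda B}$ over Schur functors $\mathbb{S}_\lambda$ — has coefficients of mixed sign (already $c_{(1,1)}=-\tfrac18$ once $n\ge2$), so positivity cannot be read off term by term.

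The crux, then, is to produce the Hilbert space whose reproducing kernel is $\det(I-A^TB)^{-1/2}$. The route I would take is via the Cayley transform $A\mapsto W_A:=(I+A)(I-A)^{-1}$, which is well defined on the real matrix ball, carries it into the domain $\mathcal W:=\{W\in M_n(\reals):W+W^{T}\succ0\}$, and — by a short computation using $W_A+W_B^{T}=2(I-B^{T})^{-1}(I-B^{T}A)(I-A)^{-1}$ — yields the identity $\det(I-A^TB)^{-1/2}=\det(I-A)^{1/2}\det(I-B)^{1/2}\,\det\!\big(\tfrac12(W_A+W_B^{T})\big)^{-1/2}$. Since multiplying a kernel by $f(A)f(B)$ preserves positive definiteness, this reduces the problem to showing that $(W,W')\mapsto\det\!\big(\tfrac12(W+W'^{T})\big)^{-1/2}$ is positive definite on $\mathcal W$, which one derives from complete-monotonicity statements for the power $\det(\cdot)^{-1/2}$ of the kind established by Brändén's work (and by Scott–Sokal) on $\det(\sum_i x_iB_i)^{-\alpha}$ for positive semidefinite $B_i$ — exactly the circle of results this paper builds upon. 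Equivalently, one may simply invoke the known classification of the ``real Wallach set'' for this domain, namely $\{0,\tfrac12,1,\dots,\tfrac{n-1}{2}\}\cup(\tfrac{n-1}{2},\infty)$, which contains every half-integer $\tfrac k2\ge\tfrac12$ — as a discrete point when $k\le n-1$, and as a point of the continuous range when $k\ge n$. I expect the genuine obstacle to lie precisely here: making the $\alpha=\tfrac12$ step self-contained (morally, the metaplectic/oscillator ``square root'' of the $\alpha=1$ construction above), or else carefully pinning down the Cayley-transform identity and citing the correct complete-monotonicity theorem; the reduction to this single kernel and the treatment of $\alpha=1$ are routine by comparison.
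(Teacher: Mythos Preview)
The claim you are attempting to prove is \emph{false}: the paper does not prove Claim~\ref{claim:bellman}, it refutes it. An explicit counterexample is given (six $2\times 2$ real contractions) for which $\lambda_{\min}\bigl([\det(I-A_i^TA_j)^{-1/2}]\bigr)\approx -1.2\times 10^{-3}$. Hence $K(A,B)=\det(I-A^TB)^{-1/2}$ is \emph{not} a positive definite kernel on the real matrix ball once $n\ge 2$, and your Schur-product reduction, while valid as a reduction, lands on a false base case.

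The slip in your argument is at the final step. The Cayley-transform identity is fine, as is the observation that multiplying a kernel by $f(A)f(B)$ preserves positive definiteness; what fails is the appeal to the ``real Wallach set'' $\{0,\tfrac12,1,\dots,\tfrac{n-1}{2}\}\cup(\tfrac{n-1}{2},\infty)$. That is the Wallach set for the \emph{symmetric} cone --- the Siegel domain built from real \emph{symmetric} positive definite matrices --- not for the domain $\{W\in\reals^{n\times n}:W+W^T\succ 0\}$ of arbitrary real matrices with positive-definite symmetric part. These are different bounded symmetric domains with different root data, and the admissible exponents differ accordingly. Likewise, the complete-monotonicity results of Br\"and\'en and Scott--Sokal, when specialized to matrices coming from arbitrary (not symmetric) contractions, force $\alpha$ into a set that \emph{excludes} $\tfrac12$; this is exactly what the paper's Theorem~\ref{thm:main} pins down via the $\alpha$-permanent and Theorem~\ref{thm:branden2}. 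Your own observation that the Schur-functor expansion of $\det(I-A^TB)^{-1/2}$ has coefficients of mixed sign (already $c_{(1,1)}=-\tfrac18$) is the warning: for symmetric arguments those signs can cancel against linear relations among the traces, but for a general product $A^TB$ no such cancellation is forced, and the counterexample shows it does not occur.
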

\noindent To prove this claim Bellman begins with the Gaussian integral representation 
$\frac{1}{\det(A)^{1/2}}  \propto \int_{\reals^m} e^{-x^TAx}dx$, 
for a real symmetric positive definite matrix $A$. Then, on \citep[pg.~488, (7.6)]{bellman1959} Bellman recalls an inequality of~\citet{ostrow1951}:
\begin{equation}
  \label{eq:6}
  \det(I-A_i^TA_j) \ge \det(I - (A_i^TA_j)_s),
\end{equation}
where $X_s := \half(X+X^T)$. Subsequently, he  states that (\emph{sic.} Theorem) Claim~\ref{claim:bellman} ``{\it
  \ldots will be demonstrated if we prove that $[\det(I - (A_i^TA_j)_s)^{-k/2}]$ is positive semidefinite.''}

\vskip6pt
\noindent This location is where the error lies: just because inequality~\eqref{eq:6} holds, from positive definiteness of $[\det(I - (A_i^TA_j)_s)^{-k/2}]$ we cannot conclude that $H_{k/2}$ is also positive definite, since the former matrix only dominates $H_{k/2}$ entrywise, not in terms of its eigenvalues. The following counterexample makes this explicit.
\begin{center}
\rule{6cm}{0.4pt}

\textbf{Counterexample}
\vskip-4pt
\rule{6cm}{0.4pt}
\begin{small}
  \begin{equation*}
    \begin{split}
      (A_j)_{j=1}^6=\biggl(
        \begin{bmatrix}
          -2 & -9\\
          -5 & -10
        \end{bmatrix}, 
        \begin{bmatrix}
          9 & -5\\
          9 & 6
        \end{bmatrix},
        &\begin{bmatrix}
          -10 & -3\\
          -6  & 3
        \end{bmatrix},
        \begin{bmatrix}
          -8 &-8\\
          1  &-10
        \end{bmatrix},
        \begin{bmatrix}
          -2 & 1\\
          -6 &-1
        \end{bmatrix},
        \begin{bmatrix}
          -1 & 3\\
          10 &-6
        \end{bmatrix}
      \biggr)\\
      A_j &\gets \frac12 \frac{A_j}{\norm{A_j}}\quad\text{for}\ 1\le j\le 6,\\
      \lambda_{\min}([\det&(I-A_i^TA_j)^{-1/2}]) \approx -1.2066 \times 10^{-3}.
    \end{split}
  \end{equation*}
\end{small}
\rule{10cm}{0.4pt}
\end{center}
\noindent\emph{Remark:} We found that the larger the number $m$ of matrices, the easier it is to generate a counterexample; however, we could  not yet generate a counterexample for $m=5$. 

Given this counterexample, the immediate question is \emph{whether Bellman's motivation to generalize Hua's claim ($\alpha > n-1$ ensures that $H_\alpha \succeq 0$) can be still rescued?} We answer this question below and show that it is indeed possible to generalize Hua's result, and thereby uncover a deep and rich relation of $\det(I-A^*B)^{-\alpha}$ to combinatorics and the theory of positive definite functions. While we provide sufficient conditions on $\alpha$, we believe that they might also be necessary.

\section{Positive definite functions on noncommuting contractions}
In this section we present results characterizing the positive definiteness of Hua-Bellman matrices. To that end, we study the corresponding function $\det(I-A^*B)^{-\alpha}$ on $A, B \in \Cc_n$, the class of $n\times n$ strictly contractive (in operator norm) complex matrices. We provide  sufficient conditions on $\alpha$ to ensure its positive-definiteness that are defined using the following two sets of possible exponents:
\begin{align*}
  D_{\reals} &:= \{-(m+1) \mid m \in \mathbb{N}\} \cup \{\tfrac{m+1}{2} \mid m \in \mathbb{N}\} \cup \{0\},\\
  D_{\C}     &:= \{\pm(m+1) \mid m \in \mathbb{N}\} \cup \{0\}.
\end{align*}
Our definitions of $D_{\reals}$ and $D_{\C}$ follow~\citep{branden2012}, though after inverting the elements to align with our presentation better. The first main result of this section is:
\begin{theorem}
  \label{thm:main}
  Let $A, B \in \Cc_n$. Then, $\det(I-A^*B)^{-\alpha}$ is a positive definite function for $\alpha \in D_{\C} \cup \{x \in \reals \mid x > n-1\}$. If $A$ and $B$ are in addition real, then $\det(I-A^TB)^{-\alpha}$ is positive definite for $\alpha \in D_{\reals} \cup \{x\in \reals \mid x > n-1\}$.
\end{theorem}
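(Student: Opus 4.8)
The plan is to establish positive definiteness of $\det(I-A^*B)^{-\alpha}$ by combining an integral/series representation with the complete monotonicity results of~\citet{branden2012} (restated after inversion of exponents), and then handling the continuous range $\alpha > n-1$ separately via Hua's classical representation-theoretic argument. First I would reduce the claim to a statement about kernels: $\det(I-A^*B)^{-\alpha}$ is positive definite exactly when, for every finite family $A_1,\dots,A_m\in\Cc_n$, the Hua--Bellman matrix $[\det(I-A_i^*A_j)^{-\alpha}]_{ij}$ is PSD. The key structural observation is that for fixed $\alpha$ in the discrete set $D_\C$ (or $D_\reals$ in the real case), $\det(I-A^*B)^{-\alpha}$ admits an expansion in terms of $\alpha$-permanents / $\alpha$-determinants (or equivalently a Taylor-type expansion whose coefficients are controlled by Br\"and\'en's theorem on the complete monotonicity of $P^{-\alpha}$ for the determinantal polynomial $P(x)=\det(\sum_i x_iA_i)$). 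Concretely, writing $\det(I-A^*B)^{-\alpha}=\sum_{k\ge 0} c_k(\alpha)\, s_k(A,B)$ where $s_k$ is a symmetric-function-type expression in the singular data of $A^*B$, one shows each term $s_k(A,B)$ is itself a positive definite kernel (a tensor power / Schur-type construction) and that the coefficients $c_k(\alpha)$ are nonnegative precisely for $\alpha$ in the stated sets — this nonnegativity is exactly the content imported from~\citep{branden2012,scott2014}.

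Second, for the real case I would additionally invoke that over $\reals$ the relevant polynomial has extra sign structure (the $\alpha$-permanent coefficients are nonnegative for half-integer $\alpha$ as well, not merely integer $\alpha$), which is what enlarges $D_\C$ to $D_\reals$; this is where one must be careful to use the \emph{correct} Br\"and\'en-type statement rather than Bellman's flawed reduction through Ostrowski's inequality~\eqref{eq:6}. Third, for the range $\alpha > n-1$ with $\alpha$ real but not in the discrete set, the series argument breaks down (coefficients need not be individually nonnegative), so here I would fall back on Hua's original approach: realize $\det(I-A^*B)^{-\alpha}$ as $\int \det(I-A^*A)^{\beta}\,\langle\text{something}\rangle$-type integral over a Siegel-domain / bounded-symmetric-domain measure, using that for $\alpha>n-1$ the measure $\det(I-Z^*Z)^{\alpha-n}\,dZ$ is finite, and read off the inner-product representation $\langle \Phi_\alpha(A),\Phi_\alpha(B)\rangle$ directly from the reproducing-kernel structure of the weighted Bergman space on the matrix ball. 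Finally I would note that $\alpha=0$ is trivial (constant kernel equal to $1$) and negative exponents $-(m+1)$ correspond to $\det(I-A^*B)^{m+1}$, a polynomial kernel whose positive definiteness follows since $A\mapsto (A^*)^{\otimes j}$ are feature maps and $\det(I-A^*B)^{m+1}$ expands with nonnegative coefficients into sums of products of entries of $A^*B$.

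The main obstacle I anticipate is the bookkeeping that makes the term-by-term argument rigorous: one must identify the expansion coefficients $c_k(\alpha)$ with the $\alpha$-character / $\alpha$-permanent quantities appearing in~\citep{branden2012} and verify that the individual building blocks $s_k(A,B)$ are genuinely PSD kernels on $\Cc_n\times\Cc_n$ (as opposed to merely being PSD matrices for the particular sample). A clean way to do this is to observe that $\det(I-A^*B)^{-\alpha}$ depends only on $A^*B$ and that $(A,B)\mapsto \trace((A^*B)^{\otimes k})$-type functions factor through the Segre-type embedding $A\mapsto A^{\otimes k}$, hence are automatically positive definite; the coefficient combinatorics then reduce to Newton's identities / Schur-polynomial positivity, which is precisely where Br\"and\'en's complete-monotonicity input is needed and where Bellman's shortcut failed. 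The secondary obstacle is ensuring the two regimes ($\alpha\in D_\C$ and $\alpha>n-1$) are stated consistently — in particular that no gap is left for real $\alpha\in(0,n-1]$ outside the half-integer lattice, which the theorem correctly does \emph{not} claim to cover.
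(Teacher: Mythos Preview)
Your proposal is essentially the paper's own argument: for the discrete range $\alpha\in D_{\C}$ (resp.\ $D_{\reals}$) one expands $\det(I-A^*B)^{-\alpha}$ via the generalized MacMahon Master Theorem into a sum of $\alpha$-permanents $\per_\alpha\bigl((A^*B)[\vm]\bigr)$, factors $(A^*B)[\vm]=\tilde A_{\vm}^*\tilde B_{\vm}$ through a Khatri--Rao construction, writes each $\per_\alpha$ as an immanant combination $\sum_\lambda c_\lambda^\alpha\, d_\lambda$ with $d_\lambda(\tilde A^*\tilde B)=\langle\psi_\lambda(\tilde A),\psi_\lambda(\tilde B)\rangle$, and then invokes Br\"and\'en's theorem; the range $\alpha>n-1$ is delegated to Hua exactly as you describe.

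One correction to your closing aside: for negative $\alpha=-(m+1)$ the claim that $\det(I-A^*B)^{m+1}$ ``expands with nonnegative coefficients into sums of products of entries of $A^*B$'' is false (already $\det(I-Z)=1-\trace Z+\cdots$ carries a minus sign, and e.g.\ in the scalar case $(a,b)\mapsto 1-\bar a b$ is not a PSD kernel), so this case is \emph{not} an elementary polynomial-kernel observation. The paper does not treat negative $\alpha$ separately; it is absorbed into the same $\per_\alpha$/Br\"and\'en machinery, and you should drop the separate ``nonnegative-coefficient'' justification.
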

\noindent The key ingredient in our proof of Theorem~\ref{thm:main} is the $\alpha$-permanent, which we now recall. 

\subsection{$\alpha$-permanents}
The $\alpha$-permanent generalizes the matrix permanent and determinant, and was introduced by~\citet{verejones1988}. Let $\alpha\in \C$ and $A=(a_{ij})$ be an $n\times n$ matrix. Then, the \emph{$\alpha$-permanent} is defined as
\begin{equation}
  \label{eq:2}
  \aper(A) := \sum_{\sigma \in \mathfrak{S}_n} \alpha^{\myhash\sigma}\prod_{i=1}^n a_{i,\sigma(i)},
\end{equation}
where $\myhash\sigma$ denotes the number of disjoint cycles in the permutation $\sigma$. This object interpolates between the determinant and permanent, and enjoys a variety of applications and connections; see e.g.,~\citep{shirai2007,crane2013,frenkel2009}.

The first key property of $\alpha$-permanents that we will need is an inner-product based representation derived in Lemma~\ref{lem:perip}.
\begin{lemma}
  \label{lem:perip}
  Let $A$, $B$ be arbitrary square $n\times n$ matrices and $\alpha \in \C$. Then, $\aper(A^*B)$ can be written as a linear combination of inner products, i.e., there exist constants $c_1,c_2,\ldots,$ such that $\aper(A^*B) = \sum_j c_j\ip{\psi_j(A)}{\psi_j(B)}$ for some nonlinear maps $\{\psi_j\}_{j \ge 1}$.
\end{lemma}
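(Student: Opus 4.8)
The plan is to expand the $\alpha$-permanent combinatorially and then recognize each resulting summand as (a constant times) an inner product of the form $\ip{\psi_j(A)}{\psi_j(B)}$. Writing $M = A^*B$ with entries $m_{ij} = \sum_k \overline{a_{ki}}\, b_{kj}$, the definition~\eqref{eq:2} gives
\[
  \aper(A^*B) = \sum_{\sigma \in \mathfrak{S}_n} \alpha^{\myhash\sigma} \prod_{i=1}^n m_{i,\sigma(i)}
              = \sum_{\sigma \in \mathfrak{S}_n} \alpha^{\myhash\sigma} \prod_{i=1}^n \Bigl(\sum_{k_i=1}^n \overline{a_{k_i,i}}\, b_{k_i,\sigma(i)}\Bigr).
\]
Expanding the inner product over all index functions $k : \{1,\dots,n\} \to \{1,\dots,n\}$, this becomes
\[
  \aper(A^*B) = \sum_{\sigma \in \mathfrak{S}_n} \alpha^{\myhash\sigma}
     \sum_{k : [n]\to[n]} \Bigl(\prod_{i=1}^n \overline{a_{k_i,i}}\Bigr)\Bigl(\prod_{i=1}^n b_{k_i,\sigma(i)}\Bigr).
\]
The first parenthesized product is a monomial in the entries of $\bar A$ depending only on $k$; the second is a monomial in the entries of $B$ depending on $k$ and $\sigma$. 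The remaining work is to reorganize this double sum so that conjugated $A$-factors are paired with matching $B$-factors.

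The key observation is that the map $i \mapsto k_i$ and the map $i \mapsto k_{\sigma^{-1}(i)}$ together let us match the two monomials. Concretely, reindex the $B$-product by $j = \sigma(i)$, so $\prod_i b_{k_i,\sigma(i)} = \prod_j b_{k_{\sigma^{-1}(j)}, j}$. For this to align columnwise with $\prod_i \overline{a_{k_i,i}}$ we group terms by the function $k$ and by the composed function $\ell := k \circ \sigma^{-1}$. Then the $A$-monomial is $\prod_{i} \overline{a_{k_i,i}}$ and the $B$-monomial is $\prod_{j} b_{\ell_j, j}$, so for fixed $k$ and $\ell$ the coefficient collects all $\sigma$ with $k\circ\sigma^{-1}=\ell$ together with their weight $\alpha^{\myhash\sigma}$. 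When $k = \ell$ this produces exactly $\overline{\bigl(\prod_i a_{k_i,i}\bigr)} \cdot \bigl(\prod_i b_{k_i,i}\bigr) = \overline{p_k(A)}\,p_k(B)$ where $p_k$ denotes the monomial $X \mapsto \prod_i x_{k_i,i}$; this is a genuine inner product (one-dimensional) between $\psi(A) := \overline{p_k(A)}$-type features. For $k \ne \ell$ the individual cross term $\overline{p_k(A)}\,p_\ell(B)$ is not by itself Hermitian, but it can be paired with the conjugate term arising from swapping the roles, or, more cleanly, one writes each such rank-one bilinear form as a linear combination of inner products via the polarization-type identity: any bilinear expression $\overline{f(A)}g(B)$ equals $\tfrac14\sum_{t=0}^3 i^{t}\,\ip{f(A)+i^t g(A)}{\,f(B)+i^t g(B)}$-style combinations. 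Thus after collecting, each term becomes $c_j \ip{\psi_j(A)}{\psi_j(B)}$ with explicit constants $c_j$ depending on $\alpha$ through the cycle weights $\alpha^{\myhash\sigma}$.

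The main obstacle is bookkeeping: making the reindexing $\sigma \mapsto (k,\ell)$ precise, verifying that every monomial pair actually appears with a $B$-argument (not $\bar B$), and organizing the leftover non-Hermitian cross terms into honest inner products without double-counting. The cleanest route is probably to avoid the cross terms entirely: fix the function $k$, factor $\prod_i \overline{a_{k_i,i}}$ out, and observe that $\sum_\sigma \alpha^{\myhash\sigma}\prod_i b_{k_i,\sigma(i)}$ is itself $\aper$ of the matrix $B$ with rows permuted/selected according to $k$ — hence a polynomial $q_k(B)$ in the entries of $B$ alone — giving $\aper(A^*B) = \sum_{k:[n]\to[n]} \overline{p_k(A)}\, q_k(B)$, and then a single application of polarization to each $(k)$-term yields the claimed representation with $O(n^n)$ terms. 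The constants $c_j$ are then products of $\alpha$-powers and $\pm 1, \pm i$ from polarization, which is all that Lemma~\ref{lem:perip} asserts.
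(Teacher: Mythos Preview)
Your argument is correct, though the write-up meanders: the ``cleanest route'' you sketch at the end is really the whole proof and should be the main line. Expanding $\aper(A^*B)=\sum_{k:[n]\to[n]}\overline{p_k(A)}\,q_k(B)$ with $p_k(X)=\prod_i x_{k_i,i}$ and $q_k(B)=\aper\bigl([b_{k_i,j}]_{i,j}\bigr)$ is valid, and your polarization identity
\[
\overline{f(A)}\,g(B)=\tfrac14\sum_{t=0}^3 i^{-t}\,\overline{\psi_t(A)}\,\psi_t(B),\qquad \psi_t(X):=f(X)+i^{t}g(X),
\]
does exactly what you need (you should state it with the correct sign on the exponent and verify it once). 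The earlier discussion of pairing $k$ with $\ell=k\circ\sigma^{-1}$ is correct but unnecessary once you have the $q_k$ formulation.

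This is a genuinely different route from the paper's. The paper does not expand entrywise; instead it invokes the identity $\aper(A)=\sum_{\lambda\vdash n}c_\lambda^\alpha\,d_\lambda(A)$ expressing the $\alpha$-permanent as a linear combination of immanants, and then uses the multilinear-algebra fact that each immanant satisfies $d_\lambda(A^*B)=\trace P_\lambda^*(A^{\otimes n})^*(B^{\otimes n})P_\lambda=\ip{\psi_\lambda(A)}{\psi_\lambda(B)}$ for a projection $P_\lambda$ onto the Specht module $S^\lambda$. That gives a canonical decomposition with $p(n)$ terms indexed by partitions, where each $\ip{\psi_\lambda(A)}{\psi_\lambda(B)}$ is already an honest inner product and only the scalar weights $c_\lambda^\alpha$ may be negative. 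Your approach is more elementary (no representation theory) but yields $O(n^n)$ terms and relies on polarization, so the individual $c_j$ carry no intrinsic meaning. For the bare statement of the lemma either works; the paper's version is tidier for the downstream step, where one wants to recognize the whole expression as a Hermitian form on a fixed feature space and then invoke Br\"and\'en's positivity theorem on the diagonal.
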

\begin{proof}
  Our proof relies on the (known) observation that the $\alpha$-permanent can be written in terms of immanants. To see how, let $\lambda \vdash n$ denote that $\lambda$ is a partition of the integer $n$. The \emph{immanant} of $A$ indexed by $\lambda$ is defined as (see~\citep{merris1997} for details)
  \begin{equation}
    \label{eq:25}
    d_\lambda(A) := \nlsum_{\sigma \in \mathfrak{S}_n} \chi_\lambda(\sigma) \nlprod_{i=1}^n a_{i,\sigma(i)},
  \end{equation}
  where $\chi_\lambda(\sigma)$ is the character associated to the irreducible representation $S^\lambda$, the Specht module corresponding to $\lambda \vdash n$---see~\citep{fulton2013} for details. Then, \citet[Eq.~(12)]{crane2013} shows that for each $\lambda$, there exist constants $c_\lambda^\alpha$ such that:\footnote{\citep[Theorem~2.4]{crane2013} notes that $c_\lambda^\alpha = \frac{1}{n!}\sum_{\sigma \in \mathfrak{S}_n} \alpha^{\myhash\sigma}\chi_\lambda(\sigma)$.}
  \begin{equation}
    \label{eq:26}
  \per_\alpha(A) = \nlsum_{\lambda \vdash n} c_\lambda^\alpha d_\lambda(A).
  \end{equation}
  Further, we know from multilinear matrix theory~\citep{merris1997} that for each $\lambda \vdash n$, there exists a projection $P_\lambda$ such that $d_\lambda(X)=\trace P_\lambda^* A^{\kron n}P$. Thus, we may write
  \begin{equation}
    \label{eq:27}
    d_\lambda(A^*B) = \trace P^*(A^*B)^{\kron n}P = \trace P^*(A^{\otimes d})^*(A^{\otimes d})P =: \ip{\psi_\lambda(A)}{\psi_\lambda(B)}.
  \end{equation}
  Plugging in representation~\eqref{eq:27} into identity~\eqref{eq:26} we obtain the identity
  \begin{align*}
    \per_\alpha(A^*B) = \nlsum_{\lambda \vdash n} c_\lambda^\alpha \ip{\psi_\lambda(A)}{\psi_\lambda(B)},
  \end{align*}
  which shows that $\aper(A^*B)$ is indeed a weighted sum of inner products.
\end{proof}

While Lemma~\ref{lem:perip} shows that $\aper(A^*B)$ can be written as a weighted sum of inner products, in general, this sum need not correspond to a usual (positive definite) inner product. In particular, it is not obvious when is $\aper(A^*A) \ge 0$. This rather nontrivial property was characterized by~\citep{branden2012} by building on the complete monotonicity theory established in~\citep{scott2014}; we recall the relevant result below.
\begin{theorem}[Theorem~2.3~\citep{branden2012}]
  \label{thm:branden2}
  Let $\alpha \in \reals$. Then, $\aper(A) \ge 0$ if and only if (i) $\alpha \in D_{\mathbb{R}}$ for $A$ real, symmetric positive definite; or (ii) $\alpha \in D_{\mathbb{C}}$ for $A$ Hermitian positive definite.
\end{theorem}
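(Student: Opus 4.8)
The plan is to pin down the sign of $\aper(A)$ on a positive definite $A$ by identifying $\aper(A)$ with a single Taylor coefficient of a determinantal polynomial raised to the power $-\alpha$, and then to feed this into the complete-monotonicity classification of~\citet{scott2014}. The first step is the identity (a standard generating-function computation)
\[
  \aper(A)=[x_1x_2\cdots x_n]\;\det\!\big(I_n-\Diag(x_1,\dots,x_n)\,A\big)^{-\alpha},
\]
valid for \emph{every} $n\times n$ matrix $A$ and every $\alpha\in\C$, where $[x_1\cdots x_n]$ extracts the coefficient of the multilinear monomial in the formal power series about $x=0$. It follows by writing $-\alpha\log\det(I-\Diag(x)A)=\alpha\sum_{k\ge1}\tfrac1k\trace\big((\Diag(x)A)^k\big)$, exponentiating, and collecting the coefficient of $x_1\cdots x_n$: a contributing term is indexed by a set partition of $[n]$ (the supports of the cycles), a block of size $\ell$ carrying a factor $\tfrac1\ell\cdot\ell=1$ from its $(\ell-1)!$ cyclic orderings each weighted by $\alpha$, and summing over all such data reproduces $\sum_{\sigma\in\mathfrak S_n}\alpha^{\myhash\sigma}\prod_i a_{i,\sigma(i)}$. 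Specializing to $A\succeq 0$, write $A=V^*V$ with $V=A^{1/2}$ and columns $v_1,\dots,v_n$; the Sylvester identity gives $\det(I_n-\Diag(x)A)=\det\big(I_n-\sum_i x_iv_iv_i^*\big)$, so replacing $x_i$ by $-x_i$ yields
\[
  \aper(A)=(-1)^n\,[x_1\cdots x_n]\;\det\!\Big(I_n+\sum_{i=1}^n x_i\,v_iv_i^*\Big)^{-\alpha}.
\]
That is, $\aper(A)$ is, up to the explicit sign $(-1)^n$, a Taylor coefficient at the origin of $P^{-\alpha}$ for the determinantal polynomial $P(x)=\det(I+\sum_i x_iA_i)$ built from the rank-one positive semidefinite matrices $A_i=v_iv_i^*$ (which are real symmetric when $A$ is).

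For the sufficiency direction, suppose $\alpha\in D_{\C}$ in the Hermitian case, respectively $\alpha\in D_{\reals}$ in the real symmetric case. Then~\citet{scott2014} establishes that $\det(\sum_i x_iA_i)^{-\alpha}$ is completely monotone on the cone $\{\sum_i x_iA_i\succ0\}$ for all positive semidefinite $A_i$; taking $A_0:=I$ among the family and restricting to $x_0=1$, the function $\det(I+\sum_i x_iA_i)^{-\alpha}$ is completely monotone on $(0,\infty)^k$. Because $P(0)=1$ and $P>0$ on all of $[0,\infty)^k$, this function is real-analytic up to the boundary, so by the multivariate Bernstein--Widder theorem it is a Laplace transform $\int_{[0,\infty)^k}e^{-\langle x,t\rangle}\,d\mu(t)$ of a nonnegative measure $\mu$; hence the coefficient of $x^\beta$ in its expansion about $0$ is $\tfrac{(-1)^{|\beta|}}{\beta!}\int t^\beta\,d\mu(t)$ and carries the sign $(-1)^{|\beta|}$. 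Applying this with the rank-one $A_i=v_iv_i^*$ of the previous paragraph and $\beta=(1,\dots,1)$, for which $|\beta|=n$, and combining with the bridge identity gives $\aper(A)\ge 0$. (When $\alpha=\tfrac12$ and $A$ is real symmetric this is the Gaussian-integral positivity that Bellman used.)

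For the necessity direction, suppose $\alpha\notin D_{\C}$ (resp. $\alpha\notin D_{\reals}$). The converse half of~\citet{scott2014}---which is precisely a statement about positivity of power-series coefficients of $P^{-\alpha}$---then produces positive semidefinite (resp. real symmetric positive semidefinite) matrices $A_1,\dots,A_k$ and a multi-index $\beta$ for which the coefficient of $x^\beta$ in $\det(I+\sum_i x_iA_i)^{-\alpha}$ has the \emph{wrong} sign, i.e. $(-1)^{|\beta|}[x^\beta]<0$. (The interval $\alpha\in(-1,0)$ is already visible at $n=2$: for the all-ones matrix $J$ one has $\aper(J)=\alpha^2+\alpha<0$.) I would then put this witness into the shape demanded by the bridge identity via two sign-preserving moves: (i) split each $A_i$ of rank $r_i$ into its rank-one pieces carried by fresh variables, so that $[x^\beta]$ of the collapsed function is a sum of coefficients of the refined function all of the same total degree $|\beta|$, whence one of them is still wrong-signed; and (ii) polarize in the exponents, replacing a variable with $\beta_i\ge1$ by $\beta_i$ fresh variables summed together, which multiplies the target coefficient by the positive factor $\beta_i!$ and makes the target monomial multilinear. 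After these reductions one has rank-one positive semidefinite $v_1v_1^*,\dots,v_Nv_N^*$ with $(-1)^N[x_1\cdots x_N]\det(I+\sum_j x_jv_jv_j^*)^{-\alpha}<0$; reading the bridge identity backwards with $A:=V^*V$ for $V=[v_1\ \cdots\ v_N]$ then gives $\aper(A)<0$.

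The substance of the theorem lives inside the two invoked halves of~\citet{scott2014}'s complete-monotonicity classification---a considerable combinatorial and analytic argument, which I would take as a black box---so on the present side of the proof the step I would have to handle most carefully is the necessity reduction: carrying out the rank-one splitting and the exponent polarization so that no cancellation can hide the wrong sign, and checking that the matrix finally produced is genuinely Hermitian (respectively real symmetric) positive semidefinite. The sufficiency direction is essentially mechanical once one is willing to quote the multivariate Bernstein--Widder representation, the only delicate point there being the verification that $\det(I+\sum_i x_iA_i)^{-\alpha}$ is real-analytic and completely monotone up to the boundary of the orthant, so that the Laplace representation---and hence the prescribed sign of every Taylor coefficient---is actually legitimate.
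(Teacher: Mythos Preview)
The paper does not prove this theorem at all: it is quoted verbatim as Theorem~2.3 of \citet{branden2012} and used as a black box, with the only commentary being that it ``was characterized by~\citep{branden2012} by building on the complete monotonicity theory established in~\citep{scott2014}.'' There is therefore no paper-side proof to compare your proposal against.

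That said, your sketch is essentially the route Br\"and\'en himself takes. The bridge identity $\aper(A)=[x_1\cdots x_n]\det(I-\Diag(x)A)^{-\alpha}$ is just the $\vm=(1,\dots,1)$ case of the generalized MacMahon Master Theorem (Theorem~\ref{thm:mac} here), and rewriting $\det(I-\Diag(x)A)$ via $A=V^*V$ and Sylvester to reach a rank-one determinantal polynomial is standard. The sufficiency direction then really is a direct consequence of Scott--Sokal's complete-monotonicity classification plus Bernstein--Hausdorff--Widder, exactly as you say. For necessity, your reduction strategy (rank-one splitting and polarization to reach a multilinear witness) is the right idea and is what makes Br\"and\'en's argument work; you are correct that this is the step requiring the most care, particularly ensuring the sign bookkeeping survives the splitting and that the assembled Gram matrix $V^*V$ lands in the correct class (real symmetric versus Hermitian). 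One caveat: be careful with sign conventions---the paper inverts Br\"and\'en's parametrization (``after inverting the elements''), and the relation $\per_\alpha$ versus $\det_\alpha$ differs across sources, so when you unpack Scott--Sokal you should double-check that the exponent sets you obtain really are $D_{\reals}$ and $D_{\C}$ as defined \emph{here}.
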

\noindent This theorem sheds light on the choice of $\alpha$ presented in Theorem~\ref{thm:main}. In addition to Lemma~\ref{lem:perip} and Theorem~\ref{thm:branden2}, our proof of Theorem~\ref{thm:main} relies on the following generalization of MacMahon's Master Theorem.
\begin{theorem}[\citep{verejones1988,branden2012}]
  \label{thm:mac}
  Let $\vm=(m_1,\ldots,m_n) \in \mathbb{N}^n$ and $A=[a_{ij}]_{i,j=1}^n$. Let $A[\vm]$ be the $|\vm|\times |\vm|$ matrix with $|\vm|=\nlsum_{j=1}^n m_j$, obtained by replacing the $(i,j)$-entry of $A$ by the $m_i\times m_j$ matrix $a_{ij}\bm{1}_{m_i}\bm{1}^T_{m_j}$. Let $X=\Diag(x_1,\ldots,x_n)$ and $\alpha \in \C$. Then,
  \begin{equation}
    \label{eq:3}
    \det(I - XA)^{-\alpha} = \sum_{\vm \in \mathbb{N}^n}\frac{\bm{x}^{\vm}}{\vm!}\per_\alpha(A[\vm]),
  \end{equation}
  where $\bm{x}^{\vm}=x_1^{m_1}\cdots x_n^{m_n}$ and $\vm!=m_1!\cdots m_n!$.
\end{theorem}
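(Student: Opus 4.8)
The plan is to regard each side of~\eqref{eq:3} as a formal power series in $\bm{x}=(x_1,\ldots,x_n)$ — equivalently, as a function analytic near the origin for fixed $A$ — and to compare the coefficient of each monomial $\bm{x}^{\vm}$. Since the right-hand side already exhibits $\per_\alpha(A[\vm])/\vm!$ as that coefficient, the whole statement reduces to
\[
  [\bm{x}^{\vm}]\,\det(I-XA)^{-\alpha}\;=\;\tfrac{1}{\vm!}\,\per_\alpha(A[\vm]).
\]
To get a handle on the left-hand side I would pass to the logarithm: from $\det(M)^{-\alpha}=\exp(-\alpha\trace\log M)$ and $-\log(I-XA)=\sum_{k\ge 1}(XA)^k/k$ one obtains $\det(I-XA)^{-\alpha}=\exp\bigl(\alpha\sum_{k\ge 1}\tfrac1k\trace((XA)^k)\bigr)$, an identity of formal power series valid for every $\alpha\in\C$ (so, pleasantly, no separate analytic-continuation argument in $\alpha$ is needed). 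Because $(XA)_{ij}=x_ia_{ij}$, the quantity $\trace((XA)^k)=\sum_{i_1,\ldots,i_k}x_{i_1}a_{i_1i_2}x_{i_2}a_{i_2i_3}\cdots x_{i_k}a_{i_ki_1}$ is a sum over closed walks of length $k$ on $\{1,\ldots,n\}$, each weighted by the product of the $A$-entries it traverses and by $\bm{x}$ raised to the multidegree recording how many times each vertex is visited. Grouping the $k$ rotations of each walk and dividing by $k$, the exponent $g(\bm{x}):=\sum_k\tfrac1k\trace((XA)^k)$ becomes $\sum_w \mathrm{wt}(w)/d_w$, a sum over cyclic words $w$ with $d_w$ the order of the rotational symmetry group of $w$.

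Next I would expand $\exp(\alpha g)$. Each term is indexed by a finite multiset $M$ of cyclic words, carries $\alpha^{|M|}$, and has $\bm{x}$-degree equal to the total visit-multidegree of the walks in $M$; precisely, its contribution is $\alpha^{|M|}\prod_w\bigl(\mathrm{wt}(w)/d_w\bigr)^{\mu_w}/\mu_w!$, where $\mu_w$ is the multiplicity of $w$ in $M$. Fix $\vm$ and look at the multisets with total multidegree $\vm$: such an $M$ visits vertex $j$ exactly $m_j$ times in all, and labelling its $m_j$ visits to vertex $j$ by the symbols $(j,1),\ldots,(j,m_j)$ turns $M$ into a permutation $\sigma$ of the index set $\{(j,s):1\le j\le n,\ 1\le s\le m_j\}$ of $A[\vm]$ whose cycles are exactly the walks of $M$; under this correspondence $\prod_{(j,s)}A[\vm]_{(j,s),\sigma(j,s)}=\prod_{w\in M}\mathrm{wt}(w)$ and $\myhash\sigma=|M|$. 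A standard count shows that the number of permutations $\sigma$ realizing a given $M$ is $\vm!\big/\bigl(\prod_w d_w^{\,\mu_w}\mu_w!\bigr)$ — the $\vm!$ labellings being over-counted exactly by the rotational symmetry of each walk-occurrence and by the permutations among equal occurrences. Substituting this count into $\per_\alpha(A[\vm])/\vm!=\tfrac1{\vm!}\sum_\sigma\alpha^{\myhash\sigma}\prod_{(j,s)}A[\vm]_{(j,s),\sigma(j,s)}$ reproduces exactly $\sum_M\alpha^{|M|}\prod_w(\mathrm{wt}(w)/d_w)^{\mu_w}/\mu_w!$, i.e. the coefficient of $\bm{x}^{\vm}$ in $\exp(\alpha g)$, completing the comparison.

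The step I expect to be the main obstacle is precisely this weight bookkeeping: $\trace((XA)^k)/k$ counts every closed $k$-walk with weight $1/k$ irrespective of primitivity, whereas a genuine $k$-cycle of a permutation carries a $k$-fold rotational redundancy as a cyclic word on labelled tokens, and reconciling these two normalizations with the $1/\mu_w!$ from the exponential and the overall $1/\vm!$ is the classical subtlety behind the exponential formula; it should be carried out carefully rather than by hand (a quick sanity check is $n=1$, where the identity reads $(1-xa)^{-\alpha}=\sum_m x^m a^m\,\alpha(\alpha+1)\cdots(\alpha+m-1)/m!$ while the $\alpha$-permanent of the $m\times m$ all-$a$ matrix equals $a^m\alpha(\alpha+1)\cdots(\alpha+m-1)$). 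Since this is exactly the computation carried out by~\citet{verejones1988} and revisited by~\citet{branden2012}, one may also simply invoke their result. If one prefers to sidestep the exponential formula entirely, an alternative is to first prove~\eqref{eq:3} for $\alpha\in\mathbb{N}$: apply the classical ($\alpha=1$) MacMahon Master Theorem to the block-diagonal matrix $A\oplus\cdots\oplus A$ ($\alpha$ copies) with independent variable blocks and then specialize all blocks to $\bm{x}$, which expresses $[\bm{x}^{\vm}]\det(I-XA)^{-\alpha}$ as a convolution of ordinary permanents of blow-ups that one identifies with $\per_\alpha(A[\vm])/\vm!$ via the ``$\alpha$-coloring of cycles'' interpretation of $\per_\alpha$ for integer $\alpha$; finally observe that, restricted to any fixed total degree $|\vm|$, both $[\bm{x}^{\vm}]\det(I-XA)^{-\alpha}$ and $\per_\alpha(A[\vm])/\vm!$ are polynomials in $\alpha$ of degree at most $|\vm|$, so agreement on $\mathbb{N}$ forces agreement on all of $\C$.
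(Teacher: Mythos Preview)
The paper does not supply its own proof of this theorem: it is quoted verbatim as a result of \citet{verejones1988} (and its restatement in~\citep{branden2012}) and then used as a black box in the proof of Theorem~\ref{thm:main}. So there is nothing in the paper to compare against, and your proposal is really a proposed proof of the cited result.

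That said, your argument is correct and is essentially Vere-Jones' original one. The identity $\det(I-XA)^{-\alpha}=\exp\bigl(\alpha\sum_{k\ge1}\tfrac1k\trace((XA)^k)\bigr)$ as formal power series, the interpretation of $\tfrac1k\trace((XA)^k)$ as a sum over cyclic words $w$ weighted by $\mathrm{wt}(w)/d_w$ (with $d_w$ the order of the rotational stabilizer of $w$), and the orbit count $\vm!/\prod_w d_w^{\mu_w}\mu_w!$ for permutations of the blown-up index set projecting to a given multiset $M$ are all right; the last is precisely the centralizer computation $|\mathrm{Stab}_G(\sigma)|=\prod_w d_w^{\mu_w}\mu_w!$ for $G=\prod_j S_{m_j}$ acting by conjugation, together with the observation that $\pi^{-1}(M)$ is a single $G$-orbit. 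Your $n=1$ sanity check is exactly the identity $\sum_{\sigma\in\mathfrak{S}_m}\alpha^{\myhash\sigma}=\alpha(\alpha+1)\cdots(\alpha+m-1)$. The alternative route you sketch---prove it for $\alpha\in\mathbb{N}$ via the classical MacMahon Master Theorem applied to $A^{\oplus\alpha}$ and then interpolate, using that both sides restricted to fixed $|\vm|$ are polynomials in $\alpha$ of degree $\le|\vm|$---is also valid and is arguably cleaner if one wants to avoid the exponential-formula bookkeeping you (rightly) flag as the delicate step.
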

Theorem~\ref{thm:mac} writes the Taylor series of $\det(I-XA)^{-\alpha}$ using $\alpha$-permanents, and the conditions on $\alpha$ stipulated by Theorem~\ref{thm:branden2} ensure that all the coefficients of this series are nonnegative (provided $A$ is positive semidefinite), and therefore help establish the desired positive definiteness property, as elaborated in the proof below.

\subsection{Proof of Theorem~\ref{thm:main}}
First, recall from \citep{hua1955} that if $\alpha > n-1$, then $\det(I-A^*B)^{-\alpha}$ is a positive definite function, a result that holds for complex contractions (and thus \emph{a forteriori} also for real ones). What remains to prove is the extended range of $\alpha$ values claimed in Theorem~\ref{thm:main}. The main idea to establish this extended range is to invoke Theorem~\ref{thm:mac}, and use it to represent $\det(I-A^*B)^{-\alpha}$ as the inner-product $ \ip{\Phi_\alpha(A)}{\Phi_\alpha(B)}$, where $\Phi_\alpha$ is a  nonlinear map that maps its argument to some Hilbert space. In other words, we build on Theorem~\ref{thm:mac} to prove that $\det(I-A^*B)^{-\alpha}$ is a \emph{kernel function}, and as a result, obtain positive definiteness of associated Hua-Bellman matrices. 

\vskip5pt
\noindent The first step is to realize that $A[\vm] = Q_{\vm}^*(A\otimes \bm{11}^T)Q_{\vm}$ for matrices $Q_{\vm}$ and $\bm{11}^T$ of appropriate sizes. Let $E_{ij}$ denote the $m_i \times m_j$ matrix of all ones, and $\bm{11}^T=E=[E_{ij}]_{i,j=1}^n$ the corresponding block matrix.\footnote{To reduce notational burden, we let the dependency of blocks of $E$ on $\vm$ remain implicit.} Then, consider the Khatri-Rao product between conformally partitioned matrices $A$ and $E$:
\begin{equation}
  \label{eq:4}
  A \ast E =
  \begin{bmatrix}
    a_{11}\kron E_{11} & \cdots  & a_{1n}\kron E_{1n}\\
    \vdots & \ddots & \vdots \\
    a_{n1} \kron E_{n1} & \cdots & a_{nn}\kron E_{nn}
  \end{bmatrix},
\end{equation}
which is nothing but $A[\vm]$ since $a_{ij}\kron E_{ij} = a_{ij}E_{ij}$. From basic properties of Khatri-Rao products~\citep{liu2008} it then follows that there exists a matrix $Q_{\vm}$ such that $A[\vm]=Q_{\vm}^*(A\kron E)Q_{\vm}$. To see this identification explicitly, let $Q_{\vm}$ be the diagonal matrix $\Diag(U^{\vm}_1,\ldots,U^{\vm}_n)$, where the $U^{\vm}_i$ are suitable subsets of the identity matrix such that $U^{\vm}_i{}^*EU^{\vm}_j=E_{ij}$. Then,
\begin{small}
  \begin{align}
    \nonumber
    Q_{\vm}^*(A\kron E)Q_{\vm} &= \begin{bmatrix}
      U^{\vm}_1{}^* && \\
      & \ddots & \\
      && U^{\vm}_n{}^*
    \end{bmatrix} 
    \begin{bmatrix}
      a_{11}E & \cdots  & a_{1n}E\\
      \vdots & \ddots & \vdots \\
      a_{n1}E & \cdots & a_{nn}E
    \end{bmatrix}
    \begin{bmatrix}
      U^{\vm}_1 && \\
      & \ddots & \\
      && U^{\vm}_n
    \end{bmatrix}\\
    \label{eq:5}
    &=
    \begin{bmatrix}
      a_{11}U^{\vm}_1{}^*EU^{\vm}_1 & a_{12}U^{\vm}_1{}^*EU^{\vm}_2 & \cdots a_{1n}U^{\vm}_1{}^*EU^{\vm}_n\\
      \\
      a_{n1}U^{\vm}_n{}^*EU^{\vm}_1 & a_{n2}U^{\vm}_n{}^*EU^{\vm}_2 & \cdots a_{nn}U^{\vm}_n{}^*EU^{\vm}_n
    \end{bmatrix}\\
    &=\begin{bmatrix}
      a_{11}E_{11} & \cdots  & a_{1n}E_{1n}\\
      \vdots & \ddots & \vdots \\
      a_{n1}E_{n1} & \cdots & a_{nn}E_{nn}
    \end{bmatrix}=A[\vm].
  \end{align}
\end{small}

Next, using~\eqref{eq:3} we see that
$\det(I-XA^*B)^{-\alpha} = \sum_{\vm \in \mathbb{N}^n}\frac{\bm{x}^{\vm}}{\vm!}\per_\alpha\bigl( (A^*B)[\vm]\bigr)$. Thus, identity~\eqref{eq:5} allows us to write
\begin{equation}
  \label{eq:7}
  (A^*B)[\vm] = Q_{\vm}^*( A^*B \kron \bm{11}^T)Q_{\vm} = Q_{\vm}^*((A\otimes 1^T)^*(B\otimes 1^T))Q_{\vm} =: \tilde{A}_{\vm}^*\tilde{B}_{\vm}.
\end{equation}
Representation~\eqref{eq:7} combined with Lemma~\ref{lem:perip} immediately allows us to write
\begin{equation}
  \label{eq:28}
  \aper((A^*B)[\vm]) = \nlsum_{\lambda \vdash n} c_\lambda^\alpha\ip{\psi_\lambda(\tilde{A}_{\vm})}{\psi_\lambda(\tilde{B}_{\vm})},
\end{equation}
which shows that $\per_\alpha( (A^*B)[\vm])$ is a linear combination of inner products. To complete the proof that~\eqref{eq:28} indeed defines a valid inner product, it remains to verify that this inner product is positive definite. Since the coefficients $c_\lambda^\alpha$ can be negative, this property is not obvious from~\eqref{eq:28}.
In fact, this property is fairly nontrivial, but fortunately, it follows from a result of~\citet{branden2012}. Indeed, since $(A^*A)[\vm]=\tilde{A}_{\vm}^*\tilde{A}_{\vm}$ is positive definite, nonnegativity of $\aper((A^*A)[\vm])$ follows from Theorem~\ref{thm:branden2}. Thus, \eqref{eq:28} is a true inner product, and depending on whether $A$ and $B$ are real or complex, the corresponding necessary and sufficient conditions on $\alpha$ are also obtained from Theorem~\ref{thm:branden2}.

Combined with Theorem~\ref{thm:mac}, we have thus shown that $\det(I-A^*B)^{-\alpha}$ is a nonnegative sum of inner products. Consequently, we can write $\det(I-A^*B)^{-\alpha}=\ip{\Phi_\alpha(A)}{\Phi_\alpha(B)}$, for a suitable map $\Phi_\alpha$, thus obtaining the desired inner product formulation.\qed

Corollary~\ref{cor:main} finally answers the questions posed by Bellman and Hua, regarding conditions ensuring the positive-definiteness of Hua-Bellman matrices.
\begin{cor}
  \label{cor:main}
  Let $m \ge 1$ and $A_1,\ldots,A_m \in \Cc_n$. Then $[\det(I-A_i^*A_j)^{-\alpha}]_{i,j=1}^m$ is Hermitian positive definite if $\alpha \in D_{\mathbb{C}} \cup \{x \in \reals \mid x > n-1\}$. Let $B_1,\ldots,B_m \in \Cc_n \cap \reals^{n\times n}$, then $[\det(I-B_i^TB_j)^{-\alpha}]$ is symmetric positive definite if $\alpha \in D_{\reals} \cup \{x \in \reals \mid x > n-1\}$.
\end{cor}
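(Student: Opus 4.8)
The plan is to obtain Corollary~\ref{cor:main} directly from the kernel representation furnished by Theorem~\ref{thm:main}. Recall that the proof of Theorem~\ref{thm:main} produces, for each admissible $\alpha$, a Hilbert space $\Hc_\alpha$ together with a (nonlinear) feature map $\Phi_\alpha : \Cc_n \to \Hc_\alpha$ such that $\det(I-A^*B)^{-\alpha} = \ip{\Phi_\alpha(A)}{\Phi_\alpha(B)}$ for all $A,B\in\Cc_n$. Given $A_1,\dots,A_m\in\Cc_n$, the Hua--Bellman matrix $[\det(I-A_i^*A_j)^{-\alpha}]_{i,j=1}^m$ is then exactly the Gram matrix $G=[\ip{\Phi_\alpha(A_i)}{\Phi_\alpha(A_j)}]_{i,j=1}^m$, which is Hermitian, and for every $c\in\C^m$ satisfies $c^*Gc=\norm{\sum_{i=1}^m c_i\Phi_\alpha(A_i)}^2\ge 0$. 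This settles the complex case for $\alpha\in D_{\C}\cup\{x\in\reals\mid x>n-1\}$.

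For the real case, I would run the identical argument using the real feature map delivered by Theorem~\ref{thm:main}, which is valid for $\alpha\in D_{\reals}\cup\{x\in\reals\mid x>n-1\}$. Since $B_1,\dots,B_m$ are real, every entry $\det(I-B_i^TB_j)^{-\alpha}$ is real, so $G$ is a real symmetric positive semidefinite matrix, as claimed.

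The one point that requires care is strict definiteness (as opposed to mere semidefiniteness): $G\succ 0$ is equivalent to linear independence of $\Phi_\alpha(A_1),\dots,\Phi_\alpha(A_m)$ in $\Hc_\alpha$. I would establish this by examining the coordinates of $\Phi_\alpha$ through the MacMahon-type expansion of Theorem~\ref{thm:mac}: these coordinates are (reweighted) monomials in the entries of the argument, and distinctness of the $A_i$ together with the nonvanishing of the relevant $\alpha$-permanent coefficients forces the corresponding feature vectors to be linearly independent, hence $G\succ0$ whenever the $A_i$ are pairwise distinct. If only positive semidefiniteness is intended, this paragraph can be omitted and the corollary is immediate.

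I do not expect a genuine obstacle here: once Theorem~\ref{thm:main} is available, Corollary~\ref{cor:main} is essentially its restatement in Gram-matrix language, all the substantive work having been done upstream (in Lemma~\ref{lem:perip}, Theorem~\ref{thm:branden2}, and Theorem~\ref{thm:mac}). The only mild subtlety is the bookkeeping distinction between ``positive definite'' and ``positive semidefinite,'' and, if strictness is meant, the injectivity/linear-independence check sketched above.
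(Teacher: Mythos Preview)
Your proposal is correct and matches the paper's approach: the paper states Corollary~\ref{cor:main} immediately after the proof of Theorem~\ref{thm:main} with no separate argument, treating it as the direct Gram-matrix reformulation of the kernel representation $\det(I-A^*B)^{-\alpha}=\ip{\Phi_\alpha(A)}{\Phi_\alpha(B)}$. Your remark on the distinction between positive semidefiniteness and strict positive definiteness is a legitimate clarification that the paper itself glosses over; the kernel argument as written yields $H_\alpha\succeq 0$, and strictness would indeed require the additional injectivity/independence check you sketch (or an implicit assumption that the $A_i$ are distinct).
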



\section{A hyperbolic-like geometry on noncommuting contractions}
\label{sec:metric}
In this section, we introduce a new (to our knowledge) hyperbolic-like geometry on the space of contractions. The  distance function introduced is suggested by Hua-Bellman matrices, whose positive definiteness induces the nonnegativity of the proposed distance. The main result of this section is Theorem~\ref{thm:dist}, which formally introduces the said geometry.
\begin{theorem}
  \label{thm:dist}
  Let $\dist: \Cc_n \times \Cc_n \to \reals_+$ be defined as
  \begin{equation}
    \label{eq:dist}
    \dist^2(A, B) := 
    \log \frac{|\det(I-A^*B)|}{\sqrt{\det(I-A^*A)}\sqrt{\det(I-B^*B)}},\qquad A, B \in \Cc_n.
  \end{equation}
  Then, $(\Cc_n,\dist)$ is a metric space.
\end{theorem}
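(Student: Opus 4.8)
The plan is to check the three metric axioms for $\dist$ in turn; symmetry, positivity, and non-degeneracy are quick, while the triangle inequality is the real work.

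\textbf{Symmetry, positivity, non-degeneracy.} Since $\|A\|,\|B\|<1$ we have $\|A^*B\|<1$, so $I-A^*B$ is invertible, $\det(I-A^*B)\neq0$, and the argument of the logarithm in~\eqref{eq:dist} is a genuine positive real; hence $\dist$ is well defined with values in $\reals_+$. Symmetry follows from $(I-A^*B)^*=I-B^*A$, which gives $|\det(I-A^*B)|=|\det(I-B^*A)|$. For positivity I would invoke Hua's identity: with $N:=(A-B)^*\inv{(I-A^*A)}(A-B)\succeq0$ it gives $(I-A^*B)\inv{(I-AA^*)}(I-B^*A)=(I-B^*B)+N\succeq I-B^*B\succ0$, so taking determinants (and using $\det(I-AA^*)=\det(I-A^*A)$) yields $|\det(I-A^*B)|^2=\det(I-A^*A)\det\bigl((I-B^*B)+N\bigr)\ge\det(I-A^*A)\det(I-B^*B)$, i.e.\ $\dist^2(A,B)\ge0$, with $\dist^2(A,A)=0$ in particular. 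Equality forces $\det((I-B^*B)+N)=\det(I-B^*B)$, and strict monotonicity of $\det$ on positive definite matrices then forces $N=0$, hence $A=B$.

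\textbf{Reducing the triangle inequality to the origin.} For a fixed midpoint $C\in\Cc_n$ let $\phi_C(Z)=(I-CC^*)^{-1/2}(Z-C)(I-C^*Z)^{-1}(I-C^*C)^{1/2}$, the classical automorphism of $\Cc_n$ with $\phi_C(C)=0$. The key input is the Hua-type resolvent identity
\[
  I-\phi_C(A)^*\phi_C(B)=(I-C^*C)^{1/2}(I-A^*C)^{-1}(I-A^*B)(I-C^*B)^{-1}(I-C^*C)^{1/2},
\]
which, upon taking determinants and moduli and cancelling the terms involving $C$, shows that $\exp(2\dist^2(A,B))=\dfrac{|\det(I-A^*B)|^2}{\det(I-A^*A)\det(I-B^*B)}$ is invariant under each $\phi_C$, hence $\dist(\phi_C A,\phi_C B)=\dist(A,B)$. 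Therefore the triangle inequality $\dist(A,B)\le\dist(A,C)+\dist(C,B)$ reduces to the case $C=0$, i.e.\ to $\dist(A,B)\le\dist(A,0)+\dist(0,B)$. Writing $x:=-\half\log\det(I-A^*A)\ge0$, $y:=-\half\log\det(I-B^*B)\ge0$ and $z:=\log|\det(I-A^*B)|$, one has $\dist^2(A,0)=x$, $\dist^2(0,B)=y$, $\dist^2(A,B)=x+y+z$, so after squaring the claim is exactly $z\le2\sqrt{xy}$.

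\textbf{The inequality $z\le2\sqrt{xy}$.} Let $\sigma_1\ge\dots\ge\sigma_n$ and $\tau_1\ge\dots\ge\tau_n$ in $[0,1)$ be the singular values of $A$ and $B$. Bounding $|\det(I-A^*B)|\le\prod_i(1+|\lambda_i(A^*B)|)$ and then using the log-majorizations $(|\lambda_i(A^*B)|)_i\prec_{\log}(s_i(A^*B))_i$ (Weyl) and $(s_i(A^*B))_i\prec_{\log}(\sigma_i\tau_i)_i$ (Horn)---both preserved under $\sum_i\log(1+\cdot)$ because $t\mapsto\log(1+e^t)$ is increasing and convex---gives $z\le\sum_i\log(1+\sigma_i\tau_i)$. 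Next I would establish the scalar estimate $\log(1+\sigma\tau)\le\sqrt{(-\log(1-\sigma^2))(-\log(1-\tau^2))}$ for $\sigma,\tau\in[0,1)$: substituting $\sigma=\tanh p$, $\tau=\tanh q$ and using $1+\tanh p\tanh q=\cosh(p+q)/(\cosh p\cosh q)$ and $-\log(1-\tanh^2 p)=2\log\cosh p$, it becomes $\sqrt{\log\cosh(p+q)}\le\sqrt{\log\cosh p}+\sqrt{\log\cosh q}$, the subadditivity of $h(t):=\sqrt{\log\cosh t}$ on $[0,\infty)$; since $h(0)=0$ this follows from concavity of $h$, which after differentiation reduces to $2\log\cosh t\le\sinh^2 t$, and the latter holds because both sides and their first derivatives vanish at $t=0$ while $\tfrac{d^2}{dt^2}(\sinh^2 t-2\log\cosh t)=2\cosh 2t-2/\cosh^2 t\ge0$. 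Applying the scalar estimate termwise and then Cauchy--Schwarz,
\begin{align*}
  z\;\le\;\sum_i\log(1+\sigma_i\tau_i)\;&\le\;\sum_i\sqrt{\bigl(-\log(1-\sigma_i^2)\bigr)\bigl(-\log(1-\tau_i^2)\bigr)}\\
  &\le\;\Bigl(\textstyle\sum_i-\log(1-\sigma_i^2)\Bigr)^{1/2}\Bigl(\textstyle\sum_i-\log(1-\tau_i^2)\Bigr)^{1/2}=\sqrt{2x}\,\sqrt{2y}=2\sqrt{xy},
\end{align*}
which completes the argument.

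\textbf{Expected main obstacle.} The delicate part is the triangle inequality, and within it the reduction to the base point $0$ via automorphism invariance, whose engine is the Hua-type resolvent identity above; once that reduction is in hand the matrix statement collapses cleanly, via majorization, to the one-variable fact that $\sqrt{\log\cosh t}$ is concave. One might hope to bypass all this by proving $\dist^2$ conditionally negative definite (Schoenberg's theorem would then give an isometric Hilbert-space embedding and hence the triangle inequality), but this route appears to fail: $(A,B)\mapsto\det(I-A^*B)^{-\alpha}$ is positive definite only for the restricted $\alpha$ of Theorem~\ref{thm:main}, so $\exp(-2\dist^2)$ is not expected to be infinitely divisible, which is why the hands-on argument seems necessary.
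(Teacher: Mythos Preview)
Your argument is correct and follows a genuinely different route from the paper's.  The paper first applies the Cayley transform $A\mapsto X=(I-A)^{-1}(I+A)$ (Lemma~\ref{lem:mobius}) to rewrite $d^2(A,B)$ as $\delta^2(X,Y)=\log\dfrac{|\det(X^*+Y)|}{\sqrt{\det(X^*+X)\det(Y^*+Y)}}$ on matrices with $\real X\succ0$, and then, via a simultaneous diagonalization tailored to the pair $(X,Y)$, splits $\delta^2$ into the S-divergence $\delta_S^2$ on the (diagonalized) real parts plus $\tfrac12\delta_2^2$ on the imaginary parts, invoking the two auxiliary metric results Theorems~\ref{thm:sdiv} and~\ref{thm:adiv}.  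You instead use the transitive automorphism group of the matrix ball: the Hua-type resolvent identity you quote is precisely the polarized Hua identity $(I-A^*C)(I-C^*C)^{-1}(I-C^*B)-(A^*-C^*)(I-CC^*)^{-1}(B-C)=I-A^*B$, which one checks directly using $C(I-C^*C)^{-1}=(I-CC^*)^{-1}C$; hence $d$ is $\phi_C$-invariant and the triangle inequality collapses to the single case $C=0$.  That case you dispatch with Weyl--Horn log-majorization (legitimate here since $t\mapsto\log(1+e^t)$ is convex and increasing), the one-variable concavity of $\sqrt{\log\cosh t}$, and Cauchy--Schwarz.  What your approach buys is self-containment---no appeal to the S-divergence or to the separate $\delta_p$ metric---and a conceptually clean reduction that makes the homogeneous ``hyperbolic-like'' structure explicit; what the paper's approach buys is a structural identification of $d^2$ with a sum of two previously studied squared distances, at the price of importing those external theorems and of a pair-dependent diagonalization whose interaction with three points $X,Y,Z$ has to be handled with some care.
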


Before proving Theorem~\ref{thm:dist}, we first recall a closely related distance function.
\begin{theorem}[S-Divergence~\citep{sra2016}]
  \label{thm:sdiv}
  Let $X, Y$ be Hermitian positive definite. Then,
  \begin{equation}
    \label{eq:17}
    \sdiv^2(X,Y) := \log\frac{\det\left(\tfrac{X+Y}{2}\right)}{\sqrt{\det(X)}\sqrt{\det(Y)}},
  \end{equation}
  is the square of a distance, i.e., $\sdiv$ is a distance.
\end{theorem}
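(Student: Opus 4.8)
My plan is to verify the three metric axioms in turn; only the triangle inequality is substantial. Symmetry of $\sdiv$ is immediate since $\tfrac{X+Y}{2}$ is symmetric in its arguments. Writing $\sdiv^2(X,Y)=\log\det\bigl(\tfrac{X+Y}{2}\bigr)-\half\log\det X-\half\log\det Y$, nonnegativity is exactly the determinantal AM--GM inequality $\det\bigl(\tfrac{X+Y}{2}\bigr)\ge\sqrt{\det X\,\det Y}$ (equivalently, concavity of $\log\det$ on the positive definite cone), and strict concavity yields $\sdiv^2(X,Y)=0$ precisely when $X=Y$; also $\sdiv^2(X,X)=0$ directly.

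For the triangle inequality $\sdiv(X,Z)\le\sdiv(X,Y)+\sdiv(Y,Z)$ I would first exploit congruence invariance: since every determinant in $\sdiv^2(G^*XG,G^*YG)$ picks up the common factor $|\det G|^2$, which cancels, relabelling by $G=Y^{-1/2}$ reduces the claim to the case $Y=I$, i.e.\ to proving $\sdiv(X,Z)\le\sdiv(X,I)+\sdiv(I,Z)$ for all positive definite $X,Z$. A one-line computation gives the identity
\[
  \sdiv^2(X,I)+\sdiv^2(I,Z)-\sdiv^2(X,Z)=\log\frac{\det(I+X)\det(I+Z)}{2^{n}\det(X+Z)} .
\]
Thus whenever $\det(I+X)\det(I+Z)\ge 2^{n}\det(X+Z)$ one even has $\sdiv^2(X,Z)\le\sdiv^2(X,I)+\sdiv^2(I,Z)$, from which the triangle inequality follows via $\sqrt{a+b}\le\sqrt a+\sqrt b$; in the complementary regime the triangle inequality is equivalent, after squaring, to the quantitative estimate
\[
  \Bigl(\log\frac{2^{n}\det(X+Z)}{\det(I+X)\det(I+Z)}\Bigr)^{\!2}\ \le\ 4\,\sdiv^2(X,I)\,\sdiv^2(I,Z) .
\]

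To attack the complementary regime I would bring in the positive-definiteness machinery developed above. The Cayley transform $X\mapsto A:=(X-I)(X+I)^{-1}$ maps the positive definite cone bijectively onto the Hermitian strict contractions, and the elementary identities $I-AB=2(X+I)^{-1}(X+Z)(Z+I)^{-1}$ and $I-A^{2}=4X(X+I)^{-2}$ give $\sdiv^2(X,Z)=\dist^2(A,B)$ together with, for $\alpha>0$,
\[
  e^{-\alpha\,\sdiv^2(X,Z)}=[\det(I-A^{2})]^{\alpha/2}\,[\det(I-B^{2})]^{\alpha/2}\,\det(I-AB)^{-\alpha}.
\]
By Theorem~\ref{thm:main} (equivalently, by Theorems~\ref{thm:mac} and~\ref{thm:branden2}) the kernel $\det(I-A^{*}B)^{-\alpha}$ is positive definite for every real $\alpha>n-1$; since multiplying by the positive factor $[\det(I-A^{2})]^{\alpha/2}[\det(I-B^{2})]^{\alpha/2}$ preserves positive definiteness, $e^{-\alpha\,\sdiv^2}$ is a positive-definite kernel on the cone, equal to $1$ on the diagonal, for all $\alpha>n-1$. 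Hence each $d_\alpha:=(1-e^{-\alpha\,\sdiv^2})^{1/2}$ with $\alpha>n-1$ is the distance induced by an isometric embedding into a Hilbert space, and in particular a genuine metric; together with the spectral formula $\sdiv^2(X,I)=\sum_i\log\cosh\!\bigl(\tfrac12\log\lambda_i(X)\bigr)$, these are the ingredients I would use to pin down the estimate above, applying positive definiteness to the $3\times3$ Gram matrix of the triple $(X,I,Z)$.

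The hard part will be exactly this last step: upgrading from the bounded metrics $d_\alpha$ ($\alpha>n-1$) to $\sdiv$ itself. A direct appeal to Schoenberg's theorem by letting $\alpha\to0^{+}$ is \emph{not} available, because $e^{-\alpha\,\sdiv^2}$ is positive definite precisely when $\det(X+Y)^{-\alpha}$ is, which fails for small $\alpha>0$ (a Berezin--Wallach-type restriction); consequently $(\psd_n,\sdiv)$ does not embed isometrically in a Hilbert space, $\sdiv^2$ is not a kernel of negative type, and the triangle inequality must be proved as the genuine three-point inequality above rather than inherited from a Hilbert-space structure. Making the complementary-regime estimate sharp enough --- balancing the left-hand logarithm against the product of the two $\sdiv^2$ terms, using the $d_\alpha$ inequalities and the spectral form --- is where I expect the real work to lie.
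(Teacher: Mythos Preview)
First, note that the paper does not prove Theorem~\ref{thm:sdiv}: it is quoted from~\citep{sra2016} and then used, together with Theorem~\ref{thm:adiv}, as an input to the proof of Theorem~\ref{thm:dist}. So there is no in-paper argument to compare your attempt against.

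As for the proposal itself, it is explicitly incomplete. The congruence reduction to $Y=I$, the identity
\[
  \sdiv^2(X,I)+\sdiv^2(I,Z)-\sdiv^2(X,Z)=\log\frac{\det(I+X)\det(I+Z)}{2^{n}\det(X+Z)},
\]
and the Cayley calculation giving $\sdiv^2(X,Z)=\dist^2(A,B)$ are all correct. But the heart of the matter --- the ``complementary regime'' estimate --- is never proved. You correctly diagnose why the Schoenberg route is blocked: $e^{-\alpha\,\sdiv^2}$ is positive definite only for $\alpha\in D_{\C}\cup(n-1,\infty)$, not for all small $\alpha>0$, so $\sdiv^2$ is not of negative type and $\sdiv$ cannot be recovered as $\lim_{\alpha\to 0^{+}}\alpha^{-1/2}d_\alpha$. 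Having identified the obstruction, you stop: the $3\times3$ Gram-matrix positivity for $\alpha>n-1$ and the spectral formula $\sdiv^2(X,I)=\sum_i\log\cosh(\tfrac12\log\lambda_i)$ are listed as ``ingredients'', but no argument connects them to the required three-point inequality, and it is not clear one exists along these lines (for large $\alpha$ the Gram constraint degenerates to the identity matrix and carries no information about $\sdiv$). Finally, within this paper's logic you cannot shortcut by invoking that $\dist$ is a metric: Lemma~\ref{lem:mobius} and the proof of Theorem~\ref{thm:dist} run the identity $\sdiv^2=\dist^2\circ\text{Cayley}$ in the \emph{opposite} direction, deducing the triangle inequality for $\dist$ from that for $\sdiv$, so importing Theorem~\ref{thm:dist} here would be circular.
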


We will use Theorem~\ref{thm:sdiv} in conjunction with Theorem~\ref{thm:adiv} in our proof of Theorem~\ref{thm:dist}. Theorem~\ref{thm:adiv} is considerably more general than what we need, however, we believe that it may be of independent interest, so we state it in its more general form; it is the second key result of this section.
\begin{theorem}
  \label{thm:adiv}
  Let $X, Y$ be arbitrary complex matrices, and $0\le p \le 2$, then 
  \begin{equation}
    \label{eq:18}
    \adiv_p^2(X,Y) := \log\det(I + |X-Y|^p),
  \end{equation}
  is the square of a distance, i.e., $\adiv_p$ is a distance (here $|X| := (X^*X)^{1/2}$).
\end{theorem}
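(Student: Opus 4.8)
The plan is to reduce the claim to a statement about a single scalar and a concrete embedding, exploiting that $\log\det(I+|X-Y|^p) = \sum_i \log(1+\sigma_i(X-Y)^p)$, where $\sigma_i(\cdot)$ are singular values. First I would establish the scalar fact that for $0\le p\le 2$ the function $t\mapsto \log(1+t^p)$ on $[0,\infty)$ is a \emph{Bernstein-type} (negative definite) function of $t$, or more precisely that $t\mapsto \log(1+t^p)$ is a metric on $[0,\infty)$ after taking the square root; the key sub-fact is that $t\mapsto t^p$ is negative definite (a classical consequence of $p\le 2$ via the subordination/Lévy–Khintchine representation of $t^{p/2}$, or directly because $t\mapsto e^{-st^{p/2}}$ is positive definite), and $t\mapsto\log(1+u)$ is a Bernstein function of $u\ge 0$, so the composition is again negative definite in $t$. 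From negative definiteness of $f(t)=\log(1+t^p)$ one gets, by Schoenberg's theorem, an isometric embedding $t\mapsto \varphi(t)$ into a Hilbert space with $\|\varphi(s)-\varphi(t)\|^2 = f(|s-t|)$; this handles the $1\times 1$ case.

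The second and main step is to lift this to matrices. Here I would use the integral representation coming from the Bernstein/negative-definite structure: $\log(1+t^p) = \int_0^\infty (1 - e^{-\lambda t^p})\,d\mu(\lambda)$ for a suitable measure $\mu$, together with the fact that $e^{-\lambda t^p}$, applied spectrally, gives a positive definite kernel on matrices in the sense that $(X,Y)\mapsto \trace\, g_\lambda(|X-Y|)$ behaves well, where $g_\lambda(t)=e^{-\lambda t^p}$. Concretely, I expect to show that $(X,Y)\mapsto \det(I+|X-Y|^p)^{-1} = \prod_i (1+\sigma_i(X-Y)^p)^{-1}$ is a positive definite kernel on complex matrices, equivalently that $\adiv_p^2$ is conditionally negative definite, and then invoke Schoenberg's theorem once more to obtain the Hilbert-space embedding $\Psi_p$ with $\|\Psi_p(X)-\Psi_p(Y)\|^2 = \log\det(I+|X-Y|^p)$, which is exactly the assertion. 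The positive-definiteness of $(X,Y)\mapsto e^{-\lambda\,\trace|X-Y|^p}$ would follow from the scalar negative definiteness of $t\mapsto t^p$ lifted to the (flat) metric $\trace$-norm-type structure; one has to be careful that $|X-Y|^p$ here means the matrix $p$-th power of $(X-Y)^*(X-Y))^{1/2}$, so $\det(I+|X-Y|^p)$ depends only on the singular values of $X-Y$, and the map $X\mapsto (X-Y)$ is affine, which lets me transfer negative definiteness of $Z\mapsto \log\det(I+|Z|^p)$ on $\{Z : Z = X-Y\}$ to the needed conditionally-negative-definiteness on pairs.

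The hard part will be the matrix lift, i.e., proving that $Z\mapsto \log\det(I+|Z|^p)$ is negative definite on the additive group of complex $n\times n$ matrices for all $0\le p\le 2$. The singular values $\sigma_i(Z)$ are not linear in $Z$, so one cannot naively reduce to the scalar case entrywise; the natural route is to write $\log\det(I+|Z|^p) = \int_0^\infty \bigl(1 - \det(I+|Z|^p)^{-?}\bigr)\cdots$ —more cleanly, use $\log\det(I+S) = \int_0^\infty \trace\bigl((I - (I+sS)^{-1})\tfrac{ds}{1+s}\bigr)$-type identities with $S=|Z|^p$, reducing to showing $Z\mapsto \trace\, h_s(|Z|)$ is negative definite for the completely monotone-derivative functions $h_s$ that arise, and then appealing to known results that $Z\mapsto \trace\, \phi(|Z|)$ (equivalently $\sum_i \phi(\sigma_i(Z))$) is negative definite whenever $\phi(\sqrt{\cdot})$ is Bernstein and $\phi$ is "operator-suitable"; the cleanest citation-friendly version restricts to $p\le 2$ precisely because $t\mapsto t^{p}$ is operator monotone-related only in that range and because $\|\,|X|^{p/2}-|Y|^{p/2}\|$ inequalities (Powers–Størmer / Araki–Yamagata type) are available for $p\le 2$. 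If a fully general matrix argument proves elusive, the fallback is to prove the inequality directly: reduce $\adiv_p$ to $\sdiv$ via Theorem~\ref{thm:sdiv} by an appropriate change of variables $X\mapsto I$, $Y\mapsto$ a positive definite matrix built from $|X-Y|^p$, mirroring how Theorem~\ref{thm:dist} will be derived, but this seems to require $p=1$ and so the general-$p$ statement genuinely needs the negative-definiteness machinery above.
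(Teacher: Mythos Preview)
Your route aims at something strictly stronger than the theorem: conditional negative definiteness of $(X,Y)\mapsto\log\det(I+|X-Y|^p)$ on the additive group of $n\times n$ matrices, which via Schoenberg would yield isometric Hilbert embeddability. The theorem only asks for the triangle inequality, and not every metric is of Hilbert type, so you may be attempting to prove a statement that is both harder and possibly false. You correctly flag the matrix lift as ``the hard part,'' but the sketch you give does not close it. In particular, the step ``positive-definiteness of $(X,Y)\mapsto e^{-\lambda\,\trace|X-Y|^p}$'' conflates two different symmetric functions of the singular values: what you need is $\sum_i\log(1+\sigma_i^p)$ to be negative definite, whereas $\trace|Z|^p=\sum_i\sigma_i^p$ is a different object, and even for the latter, negative definiteness on the noncommutative matrix group for $p<2$ is not something you can import from the scalar or commutative $L^p$ story without a separate argument. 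Your own fallback (reduce to $\sdiv$) you already concede handles at most $p=1$. So as written there is a genuine gap precisely at the crucial step.

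The paper takes an entirely different and far more elementary path: it proves the triangle inequality directly, with no kernel or embedding machinery. The three ingredients are (i) the calculus lemma that $f(t)=\sqrt{\log(1+t^p)}$ is concave on $(0,\infty)$ for $0\le p\le2$ (this is exactly where the restriction on $p$ enters); (ii) Uchiyama's subadditivity theorem, which for $C=A+B$ and concave $f$ with $f(0)=0$ gives the weak majorization $\{f(\sigma_j(C))\}_j\prec_w\{f(\sigma_j(A))+f(\sigma_j(B))\}_j$; and (iii) the fact that $x\prec_w y$ implies $x^2\prec_w y^2$, followed by summing, taking square roots, and Minkowski's inequality. Applying this with $A=X-Z$, $B=Z-Y$, $C=X-Y$ gives $\adiv_p(X,Y)\le\adiv_p(X,Z)+\adiv_p(Z,Y)$ in a few lines.
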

\noindent We will need the following simple observation in our proof of Theorem~\ref{thm:adiv}:
\begin{lemma}
  \label{lem:cve} Let $0\le p \le 2$. The function $f(t)=\sqrt{\log(1+t^p)}$ is concave on $(0,\infty)$.
\end{lemma}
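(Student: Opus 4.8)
The plan is to verify concavity by a direct second-derivative computation, after a reduction to a single-variable inequality. The case $p=0$ is trivial since then $f$ is constant, so I assume $0<p\le 2$. Writing $g(t):=\log(1+t^p)$, note $g>0$ on $(0,\infty)$, so $f=\sqrt{g}$ is smooth there and
\[
  f''(t)=\tfrac12\, g(t)^{-3/2}\Bigl(g(t)\,g''(t)-\tfrac12\, g'(t)^2\Bigr).
\]
Hence it suffices to establish the pointwise inequality $2\,g(t)\,g''(t)\le g'(t)^2$ for all $t>0$.

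Next I would compute the derivatives explicitly,
\[
  g'(t)=\frac{p\,t^{p-1}}{1+t^p},\qquad
  g''(t)=\frac{p\,t^{p-2}\,(p-1-t^p)}{(1+t^p)^2},
\]
and substitute. Cancelling the common positive factor $p\,t^{p-2}/(1+t^p)^2$ turns the target inequality into $2\log(1+t^p)\,(p-1-t^p)\le p\,t^p$. Setting $u:=t^p\in(0,\infty)$, the lemma reduces to showing
\[
  h(u):=p\,u-2\,(p-1-u)\log(1+u)\ \ge\ 0\qquad\text{for all }u\ge 0.
\]

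Finally, I would analyze $h$ by differentiation: one checks $h(0)=0$ and $h'(0)=2-p\ge 0$ — this is exactly the point where the hypothesis $p\le 2$ is used — while
\[
  h''(u)=\frac{2p}{(1+u)^2}+\frac{2}{1+u}>0\quad(u>0).
\]
Thus $h'$ is increasing, so $h'(u)\ge h'(0)\ge 0$, whence $h$ is nondecreasing on $[0,\infty)$ and $h(u)\ge h(0)=0$. The computations are elementary throughout; the only mild subtlety is recognizing that after the reduction the function $h$ is convex, so the entire statement collapses to the two boundary facts $h(0)=0$ and $h'(0)=2-p\ge 0$, with the borderline exponent $p=2$ corresponding precisely to $h'(0)=0$.
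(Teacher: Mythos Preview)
Your proof is correct and follows essentially the same route as the paper: both compute $f''$ directly, reduce to the single-variable inequality $p\,u+2(1+u-p)\log(1+u)\ge 0$ with $u=t^p$, and verify it by an elementary argument. The only difference is in that final verification: the paper chains the two standard bounds $(1+x)\log(1+x)\ge x$ and $\log(1+x)\le x$, whereas you instead observe that the auxiliary function $h$ is convex with $h(0)=0$ and $h'(0)=2-p\ge 0$; your version is arguably cleaner and makes the role of the hypothesis $p\le 2$ more transparent.
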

\begin{proof}
  We prove that $f''(t) \le 0$. Since $f''(t)=-\frac{p t^{p-2} \left(p t^p+2 \left(t^p-p+1\right) \log \left(t^p+1\right)\right)}{4 \left(t^p+1\right)^2 \log ^{\frac{3}{2}}\left(t^p+1\right)}$, it suffices to verify that $\left(p t^p+2 \left(t^p-p+1\right) \log \left(t^p+1\right)\right) \ge 0$. Writing $t^p=x$, this inequality is equivalent to
  $p x + 2 (1+x)\log(1+x) \ge 2p\log(1+x)$. Since $(1+x)\log(1+x) \ge x$, the lhs exceeds $px+2x$ which combined with the inequality $\log(1+x) \le x$ yields $px+2x \ge (p+2)\log(1+x) \ge 2p\log(1+x)$ as desired since $p\le 2$.
\end{proof}

\begin{proof}[Proof of Theorem~{\ref{thm:adiv}}]
  The key idea is to reduce the question to a setting where we can apply a subadditivity theorem of~\citet{uchiyama2006} for singular values. In particular, for complex matrices $A$, $B$, $C$ of size $n\times n$, such that $C=A+B$, Uchiyama's theorem states that for any concave function $f: \reals_+\to (0,\infty)$ such that $f(0)=0$, we have
  \begin{equation}
    \label{eq:19}
    \{f(\sigma_j(C))\}_{j=1}^n \prec_w \{f(\sigma_j(A))+ f(\sigma_j(B))\}_{j=1}^n,
  \end{equation}
  where $\prec_w$ denotes the weak-majorization partial order (see e.g., \citep[Chapter~2]{bhatia1997}), and $\sigma_j(\cdot)$ denotes the $j$-th singular value (in decreasing order).

  Let $A=X-Z$, $B=Z-Y$, and $C=X-Y$; let $a_j$, $b_j$, and $c_j$ be their singular values. Since $f(t)=\sqrt{\log(1+t^p)}$ is concave (see Lemma~\ref{lem:cve}), from~\eqref{eq:19} we thus obtain
  \begin{equation}
    \label{eq:20}
    \{f(c_j)\}_j \prec_w \{f(a_j) + f(b_j)\}_j.
  \end{equation}
  We further know that if $x \prec_w y$ for $x, y \in \reals_+^n$, then $x^s \prec_w y^s$ for $s \ge 1$ (see e.g.,~\citep[Example II.3.5]{bhatia1997}). Consequently, for $s=2$ from inequality~\eqref{eq:20} we obtain
  \begin{equation}
    \label{eq:21}
    \{f(c_j)^2\}_j \prec_w \bigl(\{f(a_j) + f(b_j)\}_j\bigr)^2.
  \end{equation}
  Thus, in particular it follows from the majorization inequality~\eqref{eq:21} that
  \begin{equation}
    \label{eq:22}
    \nlsum_{j=1}^n \log(1+c_j^p) \le \nlsum_j \left(\sqrt{\log(1+a_j^p)} + \sqrt{\log(1+b_j^p)} \right)^2,
  \end{equation}
  so that upon taking square roots on both sides we obtain
  \begin{equation*}
    \sqrt{\nlsum_{j=1}^n \log(1+c_j^p)} \le \sqrt{\nlsum_j \left(\sqrt{\log(1+a_j^p)} + \sqrt{\log(1+b_j^p)} \right)^2}.
  \end{equation*}
  Appling Minkowski's inequality on the right hand side we get
  \begin{equation}
    \label{eq:23}
    \sqrt{\nlsum_{j=1}^n \log(1+c_j^p)} \le \sqrt{\nlsum_j \log(1+a_j^p)} + \sqrt{\nlsum_j \log(1+b_j^p)}.
  \end{equation}
  But $\nlsum_j\log(1+c_j^p)=\log\det(I+|C|^p)=\log\det(I+|X-Y|^p)=\adiv_p^2(X,Y)$. Thus, inequality~\eqref{eq:23} is nothing but the triangle inequality
  \begin{equation*}
    \adiv_p(X,Y) \le \adiv_p(X,Z) + \adiv_p(Y,Z),
  \end{equation*}
  which concludes the proof.
\end{proof}

Before presenting the proof of Theorem~\ref{thm:dist}, let us briefly note Corollary~\ref{cor:hyper} that also explains why we call the geometry induced by distance~\eqref{eq:dist} to be hyperbolic-like.
\begin{cor}
  \label{cor:hyper}
  After suitable rescaling, identify diagonal matrices $X, Y \in \Cc_n$ with contractive vectors $x, y \in \mathbb{C}^n$. Then, we have the distance
  \begin{equation*}
    d^2(X,Y) = \log \frac{|1-x^*y|}{\sqrt{(1-\|x\|^2)}\sqrt{(1-\|y\|^2)}},
  \end{equation*}
  which is similar to the Cayley-Klein-Hilbert distance~\citep[pg.~120]{deza2013}.
\end{cor}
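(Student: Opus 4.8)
The plan is to verify Corollary~\ref{cor:hyper} by direct specialization of the metric from Theorem~\ref{thm:dist} to diagonal matrices, and then to note the structural resemblance to the Cayley--Klein--Hilbert distance. Since Theorem~\ref{thm:dist} already guarantees that $\dist$ is a metric on all of $\Cc_n$, the only task here is to evaluate the closed form~\eqref{eq:dist} when $A=X=\Diag(x_1,\ldots,x_n)$ and $B=Y=\Diag(y_1,\ldots,y_n)$ are diagonal (strict) contractions, under the natural identification $X\leftrightarrow x=(x_1,\ldots,x_n)$, $Y\leftrightarrow y=(y_1,\ldots,y_n)$ in $\mathbb{C}^n$.

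First I would observe that for diagonal matrices, $X^*Y=\Diag(\bar x_1 y_1,\ldots,\bar x_n y_n)$, so $I-X^*Y=\Diag(1-\bar x_1 y_1,\ldots,1-\bar x_n y_n)$ and hence $\det(I-X^*Y)=\prod_{k=1}^n(1-\bar x_k y_k)$. Likewise $\det(I-X^*X)=\prod_k(1-|x_k|^2)$ and $\det(I-Y^*Y)=\prod_k(1-|y_k|^2)$. Plugging these into~\eqref{eq:dist} gives
\begin{equation*}
  \dist^2(X,Y)=\log\frac{\bigl|\prod_k(1-\bar x_k y_k)\bigr|}{\sqrt{\prod_k(1-|x_k|^2)}\,\sqrt{\prod_k(1-|y_k|^2)}}
  =\sum_{k=1}^n \log\frac{|1-\bar x_k y_k|}{\sqrt{(1-|x_k|^2)(1-|y_k|^2)}}.
\end{equation*}
The role of ``suitable rescaling'' in the statement is then precisely the reduction to a single coordinate (or, equivalently, absorbing the sum into an aggregate contractive vector of norm $\|x\|$, $\|y\|$ so that $x^*y=\sum_k \bar x_k y_k$ plays the role of the inner product); writing the one-coordinate case, with $x,y\in\mathbb{C}$ of modulus less than one, yields exactly
\begin{equation*}
  d^2(X,Y)=\log\frac{|1-\bar x y|}{\sqrt{(1-|x|^2)}\sqrt{(1-|y|^2)}},
\end{equation*}
which is the displayed formula with $x^*y$ in place of $\bar x y$ and $\|x\|,\|y\|$ in place of $|x|,|y|$ once we pass back to the vector identification.

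Finally I would compare with the Cayley--Klein--Hilbert distance as recorded in~\citep[pg.~120]{deza2013}: that distance on the unit ball has the form $\operatorname{arccosh}\bigl(\frac{1-x^*y}{\sqrt{(1-\|x\|^2)(1-\|y\|^2)}}\bigr)$ (up to normalization), so our $d^2$ is the logarithm of the same Gram-type ratio $\frac{|1-x^*y|}{\sqrt{(1-\|x\|^2)(1-\|y\|^2)}}$ rather than its inverse hyperbolic cosine — hence ``similar to'' but not identical, justifying the ``hyperbolic-like'' terminology. The only subtlety worth a sentence is the presence of the absolute value $|1-x^*y|$: in the real contractive-vector case $1-x^*y>0$ by Cauchy--Schwarz, so the absolute value is vacuous and the match with the classical formula is exact; in the complex case the modulus is what keeps the expression real and nonnegative, consistent with Theorem~\ref{thm:dist}.

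The main (and only) obstacle is bookkeeping rather than mathematics: making the phrase ``after suitable rescaling, identify diagonal matrices with contractive vectors'' precise — i.e., specifying that the rescaling ensures $\|x\|,\|y\|<1$ and that the determinantal products collapse to the stated vector expression — and being careful that the identification is of the determinant ratio with the scalar ratio, so that the logarithm of a product of per-coordinate terms is what the formula in the corollary abbreviates. Everything else is the elementary determinant computation above.
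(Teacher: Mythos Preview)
The paper gives no proof of Corollary~\ref{cor:hyper}; it is stated as a remark-style corollary and the text moves directly on to Lemma~\ref{lem:mobius}. So there is nothing to compare against beyond the bare statement.

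Your direct computation is correct: for $X=\Diag(x_1,\ldots,x_n)$ and $Y=\Diag(y_1,\ldots,y_n)$ one gets $\dist^2(X,Y)=\sum_k\log\frac{|1-\bar x_k y_k|}{\sqrt{(1-|x_k|^2)(1-|y_k|^2)}}$, and in the scalar case $n=1$ this is exactly the displayed formula. You are also right to flag that the phrase ``after suitable rescaling'' is carrying real weight: the product $\prod_k(1-\bar x_k y_k)$ is \emph{not} equal to $1-x^*y$, nor is $\prod_k(1-|x_k|^2)$ equal to $1-\|x\|^2$, so the passage from the per-coordinate sum to the single vector expression is not an identity. Your interpretation---that the corollary is essentially the $n=1$ case rewritten in vector notation, or equivalently a first-order statement after making the entries small---is a fair reading of an informally stated corollary, and your discussion of the absolute value and the comparison with the $\operatorname{arccosh}$ form of the Cayley--Klein--Hilbert distance is accurate. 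In short: what you have written is more than the paper provides, and the one soft spot you identify (what ``suitable rescaling'' means precisely) is a genuine imprecision in the corollary itself rather than a gap in your argument.
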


To prove the hardest part of Theorem~\ref{thm:dist}, namely, the triangle inequality, we will proceed by rewriting $d^2(A,B)$ is a more amenable form. To that end, we need Lemma~\ref{lem:mobius}.
\begin{lemma}[M\"obius]
  \label{lem:mobius}
  Let $A, B \in \Cc_n$. There exist matrices $X$ and $Y$ such that
  \begin{enumerate}
  \item $I-A^*B = 2(I+X^*)^{-1}(X^*+Y)(I+Y)^{-1}$, and 
  \item $\real(X) \succ 0$, and $\real(Y) \succ 0$.
  \end{enumerate}
\end{lemma}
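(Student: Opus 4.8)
The plan is to produce $X$ and $Y$ via a matrix Cayley transform, which sends the open operator-norm unit ball onto the set of matrices with positive-definite Hermitian part, and then to verify the claimed factorization by a direct algebraic manipulation. Concretely, I would set
\[
  X := (I-A)(I+A)^{-1}, \qquad Y := (I-B)(I+B)^{-1};
\]
these are well defined because $\|A\|,\|B\|<1$ keeps $-1$ out of the spectra of $A$ and $B$, so $I+A$ and $I+B$ are invertible. Since $I\pm X$ are polynomials in $X$, they commute with each other and with $(I+X)^{-1}$, and one has the convenient identities $I+X = 2(I+A)^{-1}$, $I+X^*=2(I+A^*)^{-1}$, $I+Y=2(I+B)^{-1}$, $I+Y^*=2(I+B^*)^{-1}$, which show at once that all of $I+X$, $I+X^*$, $I+Y$, $I+Y^*$ are invertible. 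The same computation also recovers the inverse relations $A=(I-X)(I+X)^{-1}$ and $B=(I-Y)(I+Y)^{-1}$, hence $A^*=(I-X^*)(I+X^*)^{-1}$.

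For claim (2), I would compute $(I+A^*)\,(X+X^*)\,(I+A)$. Writing $X^*=(I+A^*)^{-1}(I-A^*)$ and cancelling the flanking factors against the inverses inside $X$ and $X^*$ gives
\[
  (I+A^*)(I-A) + (I-A^*)(I+A) = 2(I-A^*A),
\]
which is positive definite exactly because $\|A\|<1$. Conjugating back by the invertible matrix $(I+A)^{-1}$ yields $X+X^*\succ 0$, i.e.\ $\real(X)\succ 0$; the identical computation with $B$ in place of $A$ gives $\real(Y)\succ 0$.

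For claim (1), it suffices to verify the equivalent identity $(I+X^*)(I-A^*B)(I+Y) = 2(X^*+Y)$, since all inverses involved have been shown to exist. Substituting $A^*=(I-X^*)(I+X^*)^{-1}$ and $B=(I-Y)(I+Y)^{-1}$ and using the commutations noted above to cancel the outer factors $(I+X^*)$ and $(I+Y)$, the left-hand side collapses to $(I+X^*)(I+Y)-(I-X^*)(I-Y)$, which expands to $2X^*+2Y$. Multiplying on the left by $(I+X^*)^{-1}$ and on the right by $(I+Y)^{-1}$ then gives the stated formula.

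The proof is essentially bookkeeping, with no deep obstacle. The one thing to get right is the Cayley-transform convention: another sign choice or a swapped numerator and denominator still yields \emph{some} factorization of $I-A^*B$, but not with the precise normalization demanded by the statement (the constant $2$ and the exact combination $X^*+Y$), so I would fix the convention above from the outset and track these constants carefully. The only other care needed is to record the invertibility of $I+A$, $I+B$, $I+X$, $I+X^*$, $I+Y$, $I+Y^*$ before dividing by them.
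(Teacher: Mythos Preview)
Your proof is correct and follows the same M\"obius/Cayley-transform approach as the paper: define $X,Y$ via a Cayley transform of $A,B$, verify the factorization by direct expansion, and deduce $\real(X)\succ 0$ from $I-A^*A\succ 0$. The only cosmetic difference is the convention---your $X=(I-A)(I+A)^{-1}$ is the inverse of the paper's $X=(I-A)^{-1}(I+A)$---but since the lemma asserts mere existence and both choices satisfy the stated identity, this is immaterial.
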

\begin{proof}
  \emph{Part~(i)}: Consider the M\"obius transformation $A \mapsto (X-I)(X+I)^{-1}$, which leads to $X=(I-A)^{-1}(I+A)$, an object that is well-defined because $I-A$ is invertible due to $A$ being a strict contraction. Using this transformation on $A$ and $B$ we obtain
  \begin{align*}
    I-A^*B &= I - (I+X^*)^{-1}(X^*-I)(Y-I)(Y+I)^{-1}\\
    &= I - (I+X^*)^{-1}[X^*Y-X^*-Y+I](Y+I)^{-1}\\
    &= (I+X^*)^{-1}\bigl[(I+X^*)(Y+I) - X^*Y+X^*+Y-I\bigr](Y+I)^{-1}\\
    &= (I+X^*)^{-1}[2X^*+2Y](Y+I)^{-1}.
  \end{align*}

  \noindent\emph{Part~(ii)}: From Part~(i), we have $I-A^*A=(I+X^*)^{-1}[2X^*+2Y](I+X)^{-1}$. Thus,
  \begin{equation*}
    \real(X) = \tfrac14(I+X^*)(I-A^*A)(I+X),
  \end{equation*}
  which is clearly strictly positive definite by assumption on $A$ and definition of $X$.
\end{proof}

\vspace*{8pt}
We are now ready to prove Theorem~\ref{thm:dist}.
\begin{proof}[\bfseries Proof of Theorem~\ref{thm:dist}]
  For $A, B, C \in \Cc_n$ we need to show that (i) $d(A,B)\ge 0$; (ii) $d(A,B)=d(B,A)$; and (iii) $d(A,B) \le d(A,C)+d(B,C)$. From the $2\times 2$ matrix version of Theorem~\ref{thm:main} Hua's classical inequality $|\det(I-A^*B)|\ge \sqrt{\det(I-A^*A)\det(I-B^*B)}$ follows immediately; here, the inequality holds strictly unless $A=B$. Thus, $d(A,B) \ge 0$ with equality if and only if $A=B$. Symmetry~(ii) of $d$ is evident, so it only remains to prove the triangle inequality~(iii).
  
  From Lemma~\ref{lem:mobius}-(i) it follows that there exist $X, Y$ such that
  \begin{align}
    \nonumber
    \dist^2(A,B) &= \log\frac{|\det(2X^*+2Y)|\sqrt{\det(I+X^*)(I+X)}\sqrt{\det(I+Y^*)(I+Y)}}
                       {|\det(I+X^*)| |\det(I+Y)|\sqrt{|\det(2X^*+2X)|}\sqrt{|\det(2Y^*+2Y)|}}\\
           \label{eq:11}
           &= \log\frac{|\det(X^*+Y)|}{\sqrt{|\det(X^*+X)|}\sqrt{|\det(Y^*+Y)|}} =: \delta^2(X,Y).
  \end{align}
  By Lemma~\ref{lem:mobius}-(ii) $X^*+X \succ 0$, so that we can rewrite~\eqref{eq:11} as
  \begin{equation}
    \label{eq:15}
    \delta^2(X,Y) = \log\frac{|\det(X^*+Y)|}{\sqrt{\det(X^*+X)}\sqrt{\det(Y^*+Y)}}.
  \end{equation}
  It remains to verify that $\delta$ defined by~\eqref{eq:15} is a distance.

  Write $X=\real(X)+i\imag(X)$ (similarly $Y$), so that $|\det(X^*+Y)|=|\det(\real(X+Y)+i[\imag(Y)-\imag(X)])|$.
  The matrix $\real(X+Y)$ is positive definite while $[\imag(Y)-\imag(X)]$ is Hermitian. Thus, there exists a matrix $T$ such that $T^*\real(X+Y)T=I$ and $T^*(\imag(Y)-\imag(X))T=D$ for some diagonal matrix $D$. Moreover, since $T^*\real(X)T+T^*\real(Y)T=I$, the matrices $T^*\real(X)T$ and $T^*\real(Y)T$ commute. As a result, they can be simulatenously diagonlized using a unitary matrix, say $U$. Thus, we can write $U^*T^*\real(X)TU=D_x$ and $U^*T^*\real(Y)TU=D_y$ for diagonal matrices $D_x$, $D_y$, and also $U^*T^*\imag(Y)TU=S_y$, and $U^*T^*\imag(X)TU=S_x$ for Hermitian matrices $S_x$, $S_y$, which permits us to rewrite~\eqref{eq:15} as
  \begin{equation}
    \label{eq:16}
    \delta^2(X,Y) = \log\frac{|\det(D_x+D_y+iU^*DU)|}{\sqrt{\det{2D_x}}\sqrt{\det{2D_y}}}.
  \end{equation}
  Since $D_x+D_y=I$, we can split~\eqref{eq:16} into two parts as follows:
  \begin{align*}
    \delta^2(X,Y) &= \log\frac{\det(D_x+D_y)}{\sqrt{\det{2D_x}}\sqrt{\det{2D_y}}} + \log|\det(I+iU^*DU)|\\
    &= \log\frac{\det\left(\tfrac{D_x+D_y}{2}\right)}{\sqrt{\det D_x}\sqrt{\det D_y}} + \log|\det(I+i(S_y-S_x))|\\
    &= \delta_S^2(D_x,D_y) + \half\log\det(I+(S_y-S_x)^2)\\
    &= \delta_S^2(D_x,D_y) + \half\delta_2^2(S_x,S_y),
  \end{align*}
  from which upon using Theorems~\ref{thm:sdiv} and \ref{thm:adiv}, the triangle inequality for $\delta(X,Y)$ follows.
\end{proof}

\emph{Remarks:} We believe that the distance functions~(\ref{eq:dist}) and~(\ref{eq:18}) may find several applications in a variety of domains. Our belief is based on the diverse body of applications the related S-Divergence~(\ref{eq:17}) has found, for instance in computer vision~\citep{cherian2012}, brain-computer interfaces and imaging~\citep{yger2016}, matrix means~\citep{sra2016,chebbi2012}, geometric optimization~\citep{sra2015,boumal2020}, numerical linear algebra~\citep{sra2016b}, signal processing~\citep{bouchard2018}, machine learning~\citep{zern2018,tiomoko2019}, quantum information theory~\citep{virosztek2021}, among many others.

\setlength{\bibsep}{2pt}
\bibliographystyle{abbrvnat}

\end{document}